\def\QED{~\rule[-1pt]{5pt}{5pt}\par\medskip}
\newenvironment{proof}{{\bf Proof: \ }}{ \hfill \QED}
\let\ALP \mathcal
\let\FLD \mathscr
\renewcommand{\Re}{\mathbb{R}}
\newcommand{\Na}{\mathbb{N}}
\newcommand{\beqq}[1]{\begin{align*}#1\end{align*}}
\newcommand{\beq}[1]{\begin{align}#1\end{align}}
\newcommand{\ex}[1]{\mathbb{E}\left[#1\right]}
\renewcommand{\sp}{\text{span}}
\newcommand{\pr}[1]{\mathbb{P}\left\{#1\right\}}
\newcommand{\new}[1]{{\color{black}#1}}
\newtheorem{theorem}{\bf Theorem}
\newtheorem{proposition}{\bf Proposition}
\newtheorem{claim}{\bf Claim}
\newtheorem{lemma}[theorem]{\bf Lemma}
\newtheorem{corollary}[theorem]{\bf Corollary}
\newtheorem{assumption}{Assumption}
\newtheorem{remark}{Remark}
\newtheorem{definition}{Definition}
\let\ALP  \mathcal
\let\FLD  \mathscr
\title{Probabilistic Contraction Analysis of Iterated Random Operators}
\author{Abhishek Gupta\thanks{Abhishek Gupta is with the Electrical and Computer Engineering Department at The Ohio State University, Columbus, OH, USA. Email: \texttt{gupta.706@osu.edu}.} \quad Rahul Jain\thanks{Rahul Jain is with the Electrical Engineering Department at the University of Southern California, Los Angeles, CA, USA. Email: \texttt{rahul.jain@usc.edu}} \quad Peter Glynn\thanks{Peter Glynn is with the Management Science and Engineering Department at Stanford University, Stanford, CA, USA. Email: \texttt{glynn@stanford.edu}.\newline  The first author would also like to thank Dr. Hiteshi Sharma and Prof. William Haskell for numerous discussions on the results reported in this paper.}}
\begin{document}
\maketitle
\begin{abstract}
In many branches of engineering, Banach contraction mapping theorem is employed to establish the convergence of certain deterministic algorithms. Randomized versions of these algorithms have been developed that have proved useful in data-driven problems. In a class of randomized algorithms, in each iteration, the contraction map is approximated with an operator that uses independent and identically distributed samples of certain random variables. This leads to iterated random operators acting on an initial point in a complete metric space, and it generates a Markov chain. In this paper, we develop a new stochastic dominance based proof technique, called probabilistic contraction analysis, for establishing the convergence in probability of Markov chains generated by such iterated random operators in certain limiting regime. The methods developed in this paper provides a general framework for understanding convergence of a wide variety of Monte Carlo methods in which contractive property is present. We apply the convergence result to conclude the convergence of fitted value iteration and fitted relative value iteration in continuous state and continuous action Markov decision problems as representative applications of the general framework developed here. 
\end{abstract}

\section{Introduction}\label{sec:intro}
Let $(\ALP X,\rho)$ be a complete metric space with the metric $\rho$. Let $T:\ALP X\rightarrow\ALP X$ be a contraction operator over this space, i.e., there exists an $\alpha\in [0,1)$ such that for any $x,y\in\ALP X$, we have
\[\rho(T(x),T(y))\leq \alpha \rho(x,y).\]
According to the Banach contraction mapping theorem \cite{ali2006}, for any starting point $x_1\in\ALP X$, the sequence generated by $x_{k+1} = T(x_k), k\in\Na$ converges to the unique fixed point $x^\star$ of $T$. Many recursive algorithms in optimization, such as value and policy iteration algorithm, gradient descent algorithms, primal-dual algorithms, etc. can be viewed as iterating a contraction operator on Euclidean spaces or complete function spaces \cite{bertsekas1995dynamic,bertsekas1996neuro,almudevar2014approximate}. 

Often, evaluating $T(x)$ may be computationally challenging or just not possible, particularly when it involves computing expectations of certain functions of random variables. For such cases, Monte-Carlo algorithms have been devised in which random variables are used to approximate the computation of the operator $T$. Thus, iteration $k$ of the algorithm is an application of a random operator $\hat T^n_k$, where $n$ is the number of random samples used to approximate $T$ at the $k^{th}$ iteration and are assumed to be independent of the past samples. Consequently, $\hat T^n_k$ is independent of the past random operators $(\hat T^n_l)_{l=0}^{k-1}$. Examples of such algorithms are \new{adaptive Monte-Carlo algorithms \cite{kollman1999,desai2001adaptive, desai2001markov,ahamed2006adaptive}, sequential Monte-Carlo algorithms \cite{heng2020controlled,boustati2020generalised},} temporal difference methods \cite{tsitsiklis1997analysis}, reinforcement learning algorithms \cite{almudevar2014approximate,bertsekas1996neuro,bertsekas2012q}, approximate dynamic programming for risk sensitive Markov decision problems \cite{yu2018approximate}, first-step analysis, and optimal stopping problems. The methods developed in this paper provide a general framework for understanding convergence of a wide variety of Monte Carlo methods in which contractive property is present.

Intuitively speaking, such independent random operators when applied recursively to any starting point $x_0$ lead to a random sequence $(\hat X^n_k)_{k\in\Na}$ that drifts towards the fixed point $x^\star$ of $T$. The goal of this paper is to devise a novel framework for understanding asymptotic behavior (that is, consistency) of the iterates generated from such iterated random operators. We introduce the notion of a probabilistic fixed point of the random operators, which is adapted from \cite{haskell2016}.
\begin{definition}
A point $\bar x\in\ALP X$ is a \textit{probabilistic fixed point} of $(\hat T^n_k)_{k\in\Na,n\in\Na}$ if for any initial point $x_0\in\ALP X$ and for every $\kappa>0$,  $(\hat X^n_k)_{k,n\in\Na}$ satisfies 
\beqq{\lim_{n\rightarrow\infty}\limsup_{k\rightarrow\infty}\; \pr{\rho(\hat X^n_k, \bar x)>\kappa} = 0. }
\end{definition}
In this paper, we provide sufficient conditions under which $x^\star$ is a probabilistic fixed point of the iterated random operators.

The analysis of iterated random operators was first carried out in \cite{dubins1966invariant,barnsley1985iterated,barnsley1989recurrent}; see also the recent surveys on the topic \cite{diaconis1999iterated,stenflo2012survey,duflo2013random}. Under the assumption that the random operators have a negative Lyapunov exponent and the underlying space is Polish, these papers analyzed the stability of the Markov chain generated by the iterated random operators. The authors developed a novel ``backward iteration'' argument and proved that the Markov chain converges to a unique invariant distribution at a geometric rate. While the rate of convergence could be inferred, it is not helpful when one is interested in sample complexity bounds for recursive stochastic algorithms and compute how far the Markov chain is from the fixed point $x^\star$ of the deterministic contraction operator. 

Accordingly, this paper presents an analysis of iterated random operators with negative Lyapunov exponent satisfying certain consistency properties to complete metric spaces under somewhat different assumptions than in \cite{diaconis1999iterated}. We show that $x^\star$, the fixed point of the deterministic operator, is the probabilistic fixed point of the random operators. Our approach is based on a novel stochastic dominance type argument. We refer to this new approach as the probabilistic fixed point approach to distinguish it from other approaches. We demonstrate applicability of our general framework to empirical value iteration for the discounted-cost and average-cost continuous-state Markov decision problems (MDPs). Let us first consider some learning examples that can be modeled within the iterated random operator framework.

\noindent\textit{Example 1:} Consider the infinite horizon discounted cost Markov decision problem in which $s\in \ALP S$ is the system state, $a \in \ALP A$ is the control action, $c(s,a)$ is the one stage cost, $\alpha \in (0,1)$ is the discount factor. The state transition function is $S' = g(S,A,Z)$, where $Z\in\ALP Z$ is the exogenous noise. We assume that the state and the action spaces are finite sets.

Let $\gamma(a|s)$ denote a stationary policy -- which is a measure over $\ALP A$ for every $s\in\ALP S$. Let $\Gamma$ denote the set of all such stationary policies. The goal of the decision maker is to minimize the total discounted cost by solving the following minimization problem:
$$v^*(s)=\min_{\gamma\in\Gamma}\ex{\sum_{k=0}^{\infty}\alpha^kc(S_k,A_k)\Bigg|s_0=s, A_k \sim \gamma(\cdot|S_k)}.$$ 
The optimal value function $v^*$ is a fixed point of a contraction operator $T$, which is defined as
\beq{[T(v)](s) = \min_{a\in\ALP A} \Big\{c(s,a) + \alpha \ex{v(g(s,a,Z))}\Big\}.\label{eqn:Tdiscount}}

The operator $T$ is the \textit{Bellman operator}. It is not difficult to show that $T$ is a contraction operator over the normed vector space $\ALP V:=\{v:\ALP S\rightarrow\Re\}$ endowed with the sup norm. The optimal value function $v^*$ is the limit of the value iteration algorithm
\beqq{ v_{k+1}(s) = [T(v_k)](s) = \min_{a\in\ALP A} \{ c(s,a) + \alpha \ex{v_k(g(s,a,Z))} \}.}
When computing the expectation exactly is difficult or not possible, the Bellman operator is often approximated using the \textit{empirical Bellman operator}, $\hat T^n_k:\ALP V\to\ALP V$  (see, for example, \cite{haskell2016}) defined as
\beq{& \hat v^n_{k+1}(s) = [\hat T^n_k(\hat v^n_k)](s) \nonumber\\
& := \min_{a\in\ALP A} \Bigg\{ c(s,a) + \alpha \frac{1}{n}\sum_{i=1}^n\hat v^n_k(g(s,a,Z_{k,i})) \Bigg\}, \label{eqn:hatTdiscount}}
where $\{Z_{k,i}\}_{i=1}^n$ are independent samples of the noise variable $Z_k$. This algorithm is known as empirical value iteration. Since $\{Z_{k,i}\}_{i=1}^n$ are i.i.d., we conclude that $\hat T^n_k$ is i.i.d. and $(\hat v^n_k)_{k\in\Na}$ is a Markov chain. 
\hfill\QED

\noindent {\it Example 2.} Consider the same setting as above, but instead of the discounted cost MDP, we will consider  minimizing the average cost:
\beqq{\min_{\gamma\in\Gamma}\lim_{K\rightarrow\infty}\mathbb{E}& \Bigg[\frac{1}{K+1}\sum_{k=0}^{K}c(S_k,A_k)\Bigg| s_0=s, A_k \sim \gamma(\cdot|S_k)\Bigg].} 
Under some conditions on the MDP's state transition function, one can show that the optimal solution exists \cite{puterman2014}. In this case, the optimality condition is given by a tuple $(v^*,J^*)$, where $v^*$ is the optimal relative value function and $J^*$  is a real number called the optimal gain. The Bellman operator $T$ satisfies:
\[ v^*(s) + J^* = [T(v^*)](s) := \min_{a\in\ALP A} \Big\{c(s,a) + \ex{v^*(g(s,a,Z))}\Big\}.\]
Under certain conditions (see, for example, \cite[Sec 8.5.2]{puterman2014}), one can show that $T$ is a contraction operator over a quotient space with the span seminorm given by
\beq{span(v) = \max_{s\in\ALP S} v(s) - \min_{s\in\ALP S} v(s).\label{eqn:span}}
The computation of the optimal relative value function $v^*$ is the limit of the following iterative process, which is known as the \textit{relative value iteration} (RVI) algorithm, 
\beq{&v_{k+1}(s) = [T(v_k)](s) \nonumber\\
& := \min_{a\in\ALP A} \{ c(s,a) + \ex{v_k(g(s,a,Z))}  \}- \min_{s\in\ALP S} v_k(s).\label{eqn:Taverage}}
In this case, if the expectation operator is difficult to evaluate, then the \textit{empirical relative value iteration} (ERVI) \cite{gupta2015edp} is defined as 
\beq{& \hat v^n_{k+1}(s) = [\hat T^n_k(\hat v^n_k)](s) \nonumber\\
& := \min_{a\in\ALP A} \Bigg\{ c(s,a) + \frac{1}{n}\sum_{i=1}^n\hat v^n_k(g(s,a,Z_{k,i})) \Bigg\} - \min_{s\in\ALP S} \hat v^n_k(s),\label{eqn:hatTaverage}}
where again, $\{Z_{k,i}\}_{i=1}^n$ is a sequence of independent and identically distributed samples of the noise variable. These noise samples are generated independently from the past samples at every iteration $k$. Once again, we observe that $(\hat T^n_k)_{k\in\Na}$ is an i.i.d. sequence of operators and $(\hat v^n_k)_{k\in\Na}$ is a Markov chain.
\hfill\QED

Based on the two examples above, we observe some important properties of the empirical operator $\hat T^n_k$ defined in \eqref{eqn:hatTdiscount} and \eqref{eqn:hatTaverage}. First, consider the empirical Bellman operator for the discounted case in \eqref{eqn:hatTdiscount}. For every $v\in\ALP V$ and $\epsilon>0$, the random operators $(\hat T^n_k)_{k,n\in\Na}$ satisfy a \textit{consistency property} for every $k\in\Na$:
\beq{\label{eqn:pcp1}{\tt PCP}_1:~~ \lim_{n\rightarrow\infty}\pr{\|\hat T^n_k (v) - T(v)\|>\epsilon} = 0,}
which is established in in Proposition 5.1 in \cite{haskell2016}. In addition to this, it is not difficult to prove that every realization of $\hat T^n_k$ is a contraction operator over the space $\ALP V$ with contraction coefficient $\alpha$. 

Next, we consider the empirical Bellman operator $\hat T^n_k$ used in empirical relative value iteration in \eqref{eqn:hatTaverage}. This operator satisfies the property stated in \eqref{eqn:pcp1}, where the norm is replaced with the span seminorm. Let $\hat\alpha^n_k$ denote the contraction coefficient of $\hat T^n_k$. We can show that the random operators $(\hat T^n_k)_{k,n\in\Na}$ satisfy the following properties in the limit:
\beq{\label{eqn:pcp2}{\tt PCP}_2: &\lim_{n\rightarrow\infty} \pr{\hat\alpha^n_k >1-\delta} = 0 \text{ for all } \delta\in(0,1-\alpha), k\in\Na \nonumber\\
& \text{ and } \hat\alpha^n_k\leq 1 \text{ almost surely for all } n\in\Na, k\in\Na.}
We note here that for random operators, the Lyapunov exponent is defined as $\ex{\log(\hat\alpha^n_k)}$. Thus, the random operators enjoy \textit{negative Lyapunov exponent property} in the limit as $n\to\infty$.

We refer to the two conditions in \eqref{eqn:pcp1}-\eqref{eqn:pcp2} as the probabilistic contraction properties of the random operators. In the next section, we show that these two properties are crucial (in addition to some other hypotheses) in establishing the following result: For any $\kappa>0$, 
\beq{\label{eqn:hatxxstar}\lim_{n\to\infty}\limsup_{k\to\infty} \pr{\rho(\hat X^n_k,x^\star)>\kappa} = 0.}
This is true even when $\ALP X$ is a complete metric space. We then illustrate the application of the general theory developed here to MDPs with continuous state and action spaces. Fitted value iteration with function approximation for discounted cost MDPs with compact state and action spaces are considered in Subsection \ref{sub:discount}. The average cost case is treated in Subsection \ref{sub:average}. Theorems \ref{thm:evidiscounted} and \ref{thm:eviaverage} establish property \eqref{eqn:hatxxstar} for these cases.

\subsection{Related Work}
The convergence of random iterates generated from randomized algorithms has been a subject of intense investigation in the past 50 years. The random iterates can either converge to a deterministic quantity, admit an invariant distribution, or occupy certain open balls with high probability asymptotically. These can be construed as the stability property of the random iterates. There are three general methods to determine the stability of the random iterates: the Foster-Lyapunov method, the martingale method, or the ordinary differential equation method. The goal of this paper is to develop a fourth method based on the stochastic dominance approach. 

To invoke the Foster-Lyapunov method \cite{meyn2012,borovkov1998}, one requires the existence of a coercive Lyapunov function (i.e., whose sublevel sets are compact). If the random operators are independent and satisfy certain drift conditions with respect to the Lyapunov condition (called Foster-Lyapunov conditions), then it can be shown that under reasonable assumptions, the Markov chain admits an invariant distribution; we refer the reader to the texts \cite{meyn2012,douc2018markov} where this approach is described in detail. In finite dimensional problems, such Lyapunov functions are rather easy to construct. For infinite dimensional spaces, it is rather difficult to devise a coercive Lyapunov function. Thus, this approach is difficult to apply when the underlying state space of the random iterates is a function space.

The martingale and the ordinary differential equation approaches to establishing stability of the random iterates are quite general\cite{borkar2000ode,borkar2009stochastic,huang2002ode,kushner2003stochastic} -- in these approaches, we do not require the sequence of random operators to be independent. We only need certain additive errors to be a martingale noise difference sequence with respect to some filtration. This method is applicable mostly to finite dimensional spaces \cite{kushner2003stochastic}. We refer the reader to recent papers \cite{kulkarni2009finite, dieuleveut2016nonparametric} and the references therein for a discussion of this approach to Hilbert spaces. Once again, this approach is difficult to apply over general function spaces.

In contrast, the probabilistic contraction approach developed here can be used when the space of iterates is a complete metric space. However, to derive the convergence result, we assume that the random operators are independent and identically distributed, which is more restrictive that the assumptions placed in the martingale and the ordinary differential equation approaches. We further require this sequence of operators to satisfy the consistency and the negative Lyapunov exponent property (see \eqref{eqn:pcp1} and \eqref{eqn:pcp2} above). We call this approach the {\it probabilistic contraction analysis} to distinguish it from the previously studied approaches.


\subsection{Our Contribution}
The main result establishes the probabilistic fixed point of the random operators under certain reasonable assumptions. We prove this result using a stochastic dominance approach. The key benefits of the new approach we develop are:
\begin{enumerate}
 \item Our approach works when the underlying space $\ALP X$ is any complete metric space without a need to construct a Lyapunov function.
  \item Since we use stochastic dominance approach of the error process $\rho(\hat X^n_k,x^\star)$ to derive the probabilistic fixed point property, we avoid certain measurability issues that would typically arise if we were working with the random operators in the original state space $\ALP X$.
  \item If $\ALP X$ is a bounded complete metric space, then we obtain the rate of convergence and the sample complexity of getting close to the probabilistic fixed point with high probability. 
\end{enumerate}


The outline of the paper is as follows. In Section \ref{sec:main}, Theorem \ref{thm:hatt} bounds  $\limsup_{k\rightarrow\infty}\pr{\rho\Big(\hat X^n_k,x^\star\Big)\geq \kappa}$ for $\kappa>0$. This result in proved in Section \ref{sec:proofs}, where we use the theory of stochastic dominance \cite{shaked2007} to derive an upper bound on the probability of error being larger than $\kappa$ in the limit $k\rightarrow\infty$. We apply the results to study the convergence of the empirical value iteration algorithm for an MDP with the average cost criterion. In Subsection \ref{sub:composition}, we consider the case where the contraction operator comprises of iterated composition of many non-expansive mappings. We identify some sufficient conditions on the individual mappings and their randomized counterparts, so that the composite operator and their randomized versions satisfy the assumption of Theorem \ref{thm:hatt}. In Subsection \ref{sub:sample}, we present a sample complexity result to obtain a high confidence solution for probabilistic fixed point. We apply these results to determine the convergence of fitted value iteration for discounted and average cost MDPs with compact state and action spaces in Section \ref{sec:mdps}. We present proofs of the main results in Section \ref{sec:proofs}. Finally, we present some concluding thoughts in Section \ref{sec:conclusion}.

\section{Main Results}\label{sec:main}
Let us now formulate the problem precisely. Let $(\Omega,\FLD F,\mathbb{P})$ be a standard probability space and $\ALP X$ be a complete metric space. Define $\hat T^n_k:\ALP X\times\Omega\to\ALP X$ over this probability space such that for each $n\in\Na$, the operators $\hat T^n_k(x)$ and $\hat T^n_{k'}(x)$ are independent of each other and identically distributed for $k\neq k'$ and for all $x\in\ALP X$ (we suppress here the dependence of $\hat T^n_k$ on $\omega$ for easing the exposition). Let $\hat \alpha^n_k:\Omega\to[0,1]$ and $\hat \zeta^n_k:\Omega\to[0,\infty)$ be measurable functions such that 
\beq{ \label{eqn:alphank1}\rho(\hat T^n_k(x_1),\hat T^n_k(x_2))\leq \hat\alpha^n_k \rho(x_1,x_2) + \hat \zeta^n_k,}
for all $x_1,x_2\in\ALP X$ and $n,k\in\Na$. Here, $\hat \zeta^n_k$ could be bias and errors introduced due to certain approximations (such as function approximation or biased approximation of operators). Due to independence of the operators $(\hat T^n_k)_{k\in\Na}$, for every $n\in\Na$, $(\hat \alpha^n_k,\hat \zeta^n_k)_{k\in\Na}$ is an i.i.d. tuple. 

If $\hat\zeta^n_k\equiv 0$, then $\hat\alpha^n_k$ denotes a measurable upper bound on the contraction coefficient of $\hat T^n_k$. In this case, $\hat\alpha^n_k$ can be defined as
\beq{\hat \alpha^n_k = \sup_{x_1\neq x_2} \frac{\rho(\hat T^n_k(x_1),\hat T^n_k(x_2))}{\rho(x_1,x_2)}. \label{eqn:alphank2}}
However, it is not obvious that $\hat\alpha^n_k$ thus defined is measurable since $\ALP X$ is uncountable. It has been shown in \cite[Lemma 5.1]{diaconis1999iterated} that if $\ALP X$ is separable and complete, then $\hat \alpha^n_k$ as defined in \eqref{eqn:alphank2} is indeed measurable. For our applications, it is generally easy to derive a measurable map $\hat \alpha^n_k$ such that \eqref{eqn:alphank1} holds.

There is no reason to believe that for any sequence of random operators, a probabilistic fixed point would exist. However, we have already seen two conditions that are met in the learning algorithms that inspired this work: the consistency condition in \eqref{eqn:pcp1} and the negative Lyapunov exponent condition in \eqref{eqn:pcp2}. Our primary interest in this paper is in identifying further restrictions on the random operators under which probabilistic fixed point exists. We show that the probabilistic fixed point of $(\hat T^n_k)_{n,k}$ coincides with the fixed point of $T$. First, we need to place the following conditions on the random operators.
\begin{assumption}\label{assm:hatt}
The following holds:
\begin{enumerate}[(i)]
 \item $\ALP X$ is a complete metric space.
 \item $T:\ALP X\rightarrow\ALP X$ is a contraction operator with contraction coefficient $\alpha<1$. Let $x^\star$ denote the fixed point of $T$. 
 \item For each $n\in\Na$, $(\hat T^n_k)_{k\in\Na}$ is a sequence of independent and identically distributed operators.
 \item For any $k\in\Na$ and $\delta\in (0,1-\alpha)$,
 \beqq{\lim_{n\rightarrow\infty}\pr{\hat\alpha^n_{k}\geq 1-\delta} = 0}
 and $\hat\alpha^n_k\leq 1$ almost surely for all $n,k\in\Na$.
 \item For any $k\in\Na$ and $\epsilon>0$,
 \beqq{\lim_{n\rightarrow\infty}\pr{\rho\Big(\hat T^n_k(x^\star), T(x^\star)\Big) +\hat\zeta^n_k\geq \epsilon} = 0.}
 \item There exists $\bar w>0$ such that $\rho(\hat T^n_k(x^\star),T(x^\star))+\hat \zeta^n_k\leq \bar w$ almost surely for every $n\in\Na$ and $k\in\Na$.
\end{enumerate}
\end{assumption}
 
In our experience, Assumption \ref{assm:hatt} can be ascertained in practice as follows. Assumption \ref{assm:hatt}(i)-(iii) is routine and easy to ascertain in most cases. Assumption \ref{assm:hatt}(iv)-(v) is typically established using a concentration of measures result \cite{ledoux2001concentration,douc2018markov} or empirical processes theory \cite{pollard1984convergence,van1996weak}. Computing tight bounds on $\bar w$ used in Assumption \ref{assm:hatt}(vi) is the one that requires some effort. Based on the assumption, we have the following result, whose proof is presented in Section \ref{sec:proofs}.

\begin{theorem}\label{thm:hatt}
Suppose that Assumption \ref{assm:hatt} holds. Fix $\kappa>0$ and pick $\epsilon,\delta>0$ in the set
\beq{\ALP B:=\left\{\epsilon\in\left(0,\frac{\kappa(1-\alpha)}{2}\right],\delta\in(0,1-\alpha]: \left\lceil\frac{2}{\delta}\right\rceil \leq \frac{\kappa}{\epsilon}\right\}.\label{eqn:ALPB}}
Let $\beta = \left\lceil\frac{\bar w}{\epsilon}\right\rceil$. Pick $n$ sufficiently large so that $p^n_{\epsilon,\delta}>\frac{2\beta}{2\beta+1}$ can be picked satisfying
\beq{p^n_{\epsilon,\delta}\leq \inf_{k\in\Na} \pr{\hat\alpha^n_k\leq 1-\delta,\rho\Big(\hat T^n_k(x^\star) , T(x^\star)\Big)+\hat\zeta^n_k \leq \epsilon}.\label{eqn:pned}}
Then, we have 
\beqq{\limsup_{k\rightarrow\infty}\;\pr{\rho\Big(\hat X^n_k,x^\star\Big)>\kappa} &\leq \frac{1-(p^n_{\epsilon,\delta})^\beta}{\big(p^n_{\epsilon,\delta}\big)^\beta}.}
This further implies
\beqq{\lim_{n\rightarrow\infty}\limsup_{k\rightarrow\infty}\;\pr{\rho\Big(\hat X^n_k,x^\star\Big)>\kappa} &=0.}
Consequently, the point $x^\star$ is the unique probabilistic fixed point of the random operators $(\hat T^n_k)_{k\in\Na,n\in\Na}$.
\end{theorem}

Let us illustrate the application of the theorem by applying it  on Example 1.

\noindent\textit{Example 3:}
Recall the notation in Example 1. Here, we have $\ALP X = \Re^{\ALP S}$ endowed with the $\ell_\infty$ norm, $\hat\alpha^n_k \equiv\alpha$, and $\zeta^n_k = 0$ almost surely. We have
\beqq{&\|\hat T^n_k (v^*) - T(v^*)\|_\infty = W^n_k\leq \\
&\alpha \max_{s,a}\Bigg|\frac{1}{n}\sum_{i=1}^nv^*(g(s,a,Z_{k,i})) - \ex{v^*(g(s,a,Z))}\Bigg|.}
Using the Hoeffding inequality and the union bound, we have 
\beqq{\pr{W^n_k>\epsilon}\leq 2|\ALP S||\ALP A| \exp\left(-\frac{n\epsilon^2}{2\alpha^2\|v^*\|_\infty^2}\right).}
Further, $W^n_k \leq \bar w :=2\frac{\|c\|_\infty}{1-\alpha}$. Pick $\delta = 1-\alpha$, $\epsilon = \frac{\kappa(1-\alpha)}{2}$, we conclude that $v^*$ is a probabilistic fixed point of the operators $(\hat T^n_k)_{n,k}$. This result was derived in \cite[Theorem 3.1]{haskell2016} \new{by exploiting the structure of the problem. Here, we have used the general result of Theorem \ref{thm:hatt} to arrive at the conclusion.}

We would like to note two interesting observations here. The first one is that under this assumption, we require the consistency condition, Assumption \ref{assm:hatt}(v), to hold only at $x^\star$, not over the entire space. We can exploit some specific property of $x^\star$ to establish the consistency result. For instance, in a class of MDPs, the space of value functions $\ALP V$ could be the space of bounded measurable functions, but the optimal value function $v^\star$ could be a Lipschitz continuous function (see, for example, \cite{hinderer2005lipschitz,shao2022ev}). Thus, we do not need to show the consistency condition for all bounded measurable functions, but just for one Lipschitz continuous function $v^\star$.

The second observation is that $\bar w$, defined in Assumption \ref{assm:hatt}(vi), can be made small by an appropriate choice of the random operators $(\hat T^n_k)_{k\in\Na}$. For instance, almost all the variance reduction algorithms we have seen in the literature, for example \cite{defazio2014saga,harikandeh2015stopwasting,sidford2018near,wainwright2019variance}, attempt to make $\bar w$ as small as possible by a carefully constructed random operator $\hat T^n_k$. To see this, note that if $\bar w = 0$ and $n$ is sufficiently large so that the random operators have a negative Lyapunov exponent, then 
 $\beta = 0$ and for any $\kappa>0$, we have
\beqq{\limsup_{k\rightarrow\infty}\;\pr{\rho\Big(\hat X^n_k,x^\star\Big)>\kappa} &\leq \frac{1-(p^n_{\epsilon,\delta})^\beta}{\big(p^n_{\epsilon,\delta}\big)^\beta} = 0.}
We present a more complete discussion on this part with several examples in a recent paper of the first author \cite{gupta2021convergence}.

\begin{remark}\label{rem:lbalphaw}
A lower bound in \eqref{eqn:pned} can be derived from Frechet-Hoeffding Copula bound \cite[Theorem 3.1.1]{rachev1998i}:
\begin{align*}
& \mathbb{P}(\hat{\alpha}_k^n\leq
1-\delta,\rho(\hat{T}_k^n(x^*),T(x^*))+\hat{\zeta}_k^n\le \epsilon)
\\
& \geq 
\mathbb{P}(\hat{\alpha}_k^n
\leq1-\delta) + \mathbb{P}(\rho(\hat{T}_k^n(x^*),T(x^*))+\hat{\zeta}_k^n\le \epsilon) - 1\\
& = 1-\mathbb{P}(\hat{\alpha}_k^n>
1-\delta)-\mathbb{P}(\rho(\hat{T}_k^n(x^*),T(x^*))+\hat{\zeta}_k^n> \epsilon) 
\end{align*}
The first term on the right can be derived from the usual concentration of measure results. The second term on the right can be derived from covering number argument frequently used in empirical process theory \cite{van1996weak,gyorfi2002principles} or function approximation theorems \cite{hanin2019universal,steinwart2008support}. These ideas have been employed in Section \ref{sec:mdps} of the paper as well. 
\end{remark}

We now proceed to develop two other consequences of the main result. In the next subsection, we consider the case when the contraction operator is a composition of multiple maps, which are approximated by random maps to yield the corresponding random operators. In the subsequent subsection, we derive the sample complexity result for the convergence to probabilistic fixed point.  
\subsection{Composition of Operators}\label{sub:composition}
In many complex optimization and reinforcement learning problems, the operator $T$ may be a composition of multiple mappings, each of which may be non-expansive or contraction. We may need to replace each of these mappings using random approximations. Our goal in this section is to study such cases and identify sufficient conditions on the random mappings that implies Assumption \ref{assm:hatt}. 

To be precise, let $\ALP X_1,\ldots,\ALP X_{L+1}$ be $L+1$ metric spaces with $\ALP X_1 = \ALP X_{L+1} = \ALP X$, which is, by assumption, a complete metric space. Let $\rho_l$ denote the metric on the space $\ALP X_l$. Let $T_l:\ALP X_l\rightarrow\ALP X_{l+1}$, $l=1,\ldots,L$ be a sequence of mappings such that $T_l$ is Lipschitz with coefficient $\alpha_l\in[0,1]$, i.e.,
\beqq{\rho_{l+1}(T_l(x),T_l(y))\leq \alpha_l\rho_l(x,y).}
Define $T:=T_L\circ \ldots\circ T_1$, and let $\alpha:=\prod_{l=1}^L\alpha_l$ denote the contraction coefficient of $T$ (we assume here that $\alpha<1$). We do not assume completeness of $\ALP X_2,\ldots,\ALP X_L$. Let $x^\star$ be the fixed point of $T$. Define $x_1^\star = x^\star$ and recursively define
\beqq{x_{l+1}^\star = T_l(x_l^\star),\quad  l = 1,\ldots,L.}
It is obvious that $x_{L+1}^\star = x^\star$.

Let $\hat H^n_{k,l}:\ALP X_l\rightarrow\ALP X_{l+1}$ be the random mapping at time $k$ that replaces the map $T_l$ (here, $l=1,\ldots,L$). Define $\hat T^n_k:=\hat H^n_{k,L}\circ\ldots\circ \hat H^n_{k,1}$ to the random operator that approximates $T$. Let $\hat \alpha^n_{k,l}:\Omega\to[0,1]$ denote the measurable map such that $\hat H^n_{k,l}$ satisfies for $l = 1,\ldots, L-1$
\beqq{\rho_{l+1}\Big(\hat H^n_{k,l}(x_{l,1}),\hat H^n_{k,l}(x_{l,2})\Big) & \leq \hat\alpha^n_{k,l}\rho(x_{l,1},x_{l,2}),\\
\rho_{L+1}\Big(\hat H^n_{k,L}(x_{L,1}),\hat H^n_{k,L}(x_{L,2})\Big)& \leq \hat\alpha^n_{k,L}\rho(x_{L,1},x_{L,2})+\hat\zeta^n_k,}
for all $x_{l,1},x_{l,2}\in\ALP X_l, l = 1,\ldots,L$. We make the following assumption on the random mappings.
\begin{assumption}\label{assm:hath}
 The following holds:
 \begin{enumerate}[(i)]
  \item For each $l=1,\ldots,L$, the sequence $(\hat H^n_{k,l})_{k\in\Na}$ is a sequence of independent and identically distributed mappings. Each mapping $\hat H^n_{k,l}$ is independent of all of the other mappings.
 \item For every $n,k\in\Na$, $\hat\alpha^n_{k,l} \leq 1$ almost surely. Further, for every $k\in\Na$ and $\delta\in (0,1-\alpha)$,
 \beqq{\lim_{n\rightarrow\infty}\pr{\min_{l}\hat\alpha^n_{k,l}\geq 1-\delta} = 0.}
 \item For every $l = 1,\ldots,L-1$, every $k\in\Na$, and $\epsilon>0$,
 \beqq{\lim_{n\rightarrow\infty}\pr{\rho\Big(\hat H^n_{k,l}(x^\star_l), T_l(x^\star_l)\Big)\geq \epsilon} = 0,\\
 \lim_{n\rightarrow\infty}\pr{\rho\Big(\hat H^n_{k,L}(x^\star_L), T_L(x^\star_L)\Big)+\hat \zeta^n_k\geq \epsilon} = 0.}
 \item For each $l=1,\ldots,L$, there exists $\bar w_l>0$ such that $\rho\Big(\hat H^n_{k,l}(x^\star_l),T_l(x_l^\star)\Big)\leq \bar w_l$ for $l=1,\ldots,L-1$ and $\rho\Big(\hat H^n_{k,L}(x^\star_L),T_L(x_L^\star)\Big)+\hat\zeta^n_k\leq \bar w_L$ almost surely for every $n\in\Na$ and $k\in\Na$.
 \end{enumerate}
\end{assumption}

\begin{theorem}\label{thm:hath}
 If Assumption \ref{assm:hath} is satisfied, then the composite operators $T$ and $\hat T^n_k$ satisfies Assumption \ref{assm:hatt}.
\end{theorem}
\begin{proof}
It is clear that by hypotheses, Assumption \ref{assm:hatt}(i)-(ii) is satisfied. We refer the reader to Subsection \ref{sub:hath}, where we establish that Assumption \ref{assm:hatt}(iii)-(vi) is also satisfied.
\end{proof}

Typically, all $(\hat H^n_{k,l})_{l=1}^L$ may not use the same number of samples $n$. In these cases, we can replace the number of samples $n_l$ as a function of $n$ such that $n_l(n)$ monotonically diverges to infinity as $n$ diverges to infinity.

\subsection{Sample Complexity of Computing Probabilistic Fixed Point}\label{sub:sample}
To establish Theorem \ref{thm:hatt}, we  employ the theory of stochastic dominance. We construct a dominating Markov chain over the space of natural numbers whose invariant distribution dominates the probability of error being large as $k\to\infty$. The dominating Markov chain construction can provide us with a sample complexity bound on the number of samples required to get close to the fixed point $x^\star$ with high probability. For a fixed $\kappa>0$ and confidence level $\xi$, let $N_{\kappa,\xi}$ be defined as
\beqq{N_{\kappa,\xi} =&  \inf_{(\epsilon,\delta)\in\ALP B} \Bigg\{n\in\Na:\\
& \pr{\hat\alpha^n_k\leq 1-\delta,W^n_k \leq \epsilon} \geq \sqrt[w]{\frac{1}{1+\xi}} \Bigg\},}
where $\ALP B$ is defined in \eqref{eqn:ALPB}, and we define $W^n_k:=\rho\Big(\hat T^n_k(x^\star) , T(x^\star)\Big)+\hat\zeta^n_k$. Then, for any $n\geq N_{\kappa,\xi}$, we have
\beqq{\underset{k\rightarrow\infty}{\lim\sup}\;\pr{\rho(\hat X^n_k , x^\star)\geq \kappa}\leq \xi.}
The proof of the above result follows immediately from Theorem \ref{thm:hatt} and Proposition \ref{PROP:INVARIANT}, which is established later in Section \ref{sec:proofs}. We note here that the sample complexity is only in terms of the number of samples $n$ picked to approximate the operator at every time step; we allow the number of iterations to go to infinity, which implies that the true sample complexity is infinite. 


To provide a finite sample complexity bound, we further need to bound the space as well, that is, we assume $\ALP X$ to be a bounded space satisfying:
\beqq{D:=\text{diam}(\ALP X) = \sup_{x,y\in\ALP X}\rho(x,y)<\infty.}
This condition is readily satisfied in many optimization and empirical dynamic programming algorithms, where the operator $\hat T^n_k$ is a contraction operator with the contraction coefficient $\alpha$ almost surely, and one knows the region in the space $\ALP X$ where the solution lies. 

This is also satisfied if we allow the random operator to project the iterates back to $\ALP X$ if the iterates become large. In this case, we have the composition of a projection operator and a random operator. It is well-known that if $\ALP X$ is a Hilbert space (say Euclidean space with the $\ell_2$ norm or a reproducing kernel Hilbert space), then the projection operation is non-expansive. In case $\ALP X$ is not a Hilbert space, then the projection operation must be picked carefully so that it remains a non-expansive operator. In this situation, the property of the random operators remain unchanged due to the projection operation.

In the following theorem, we present a sample complexity bound for this case, which is based on the analysis in \cite{haskell2016} and has been used in \cite{haskell2017randomized,haskell2019universal,sharma2019approximate,sharma2019approximately, sharma2019empirical} to provide the sample complexity bounds for various empirical dynamic programming algorithms with function approximation. For $\kappa>0$, pick $(\epsilon,\delta)\in\ALP B$, where $\ALP B$ is defined in \eqref{eqn:ALPB}. Let $d := \lceil D/\epsilon\rceil$, $p:=p^n_{\epsilon,\delta}$, which satisfies \eqref{eqn:pned}, and $\eta:=\left\lceil\frac{2}{\delta}\right\rceil $. For confidence level $\xi>0$, define $N_{\kappa,\xi}$ and $K_{\kappa,\xi}$ as 
\beqq{N_{\kappa,\xi} &:= \inf \Bigg\{n\in\Na:p \geq\max\left\{\frac{1}{2},\left( 1-\frac{\xi}{3} \right)^{1/(d-\eta-1)}\right\} \Bigg\},}
\beqq{K_{\kappa,\xi} &:= \log\left( \frac{3}{\xi(1-p)p^{d-\eta-1}}\right).}

\begin{theorem}\label{thm:samplecomplex}
Suppose that Assumption \ref{assm:hatt} (i)-(v) holds with a bounded complete metric space $\ALP X$ with diameter $D$. Then, we have
 \beqq{\pr{\rho(\hat X^n_k,x^\star)>\kappa}< \xi\text{ for all } k\geq K_{\kappa,\xi} \text{ and }  n\geq N_{\kappa,\xi}.}
\end{theorem}
\begin{proof}
See Subsection \ref{sub:samplecomplex}.
\end{proof}

\new{We note here that for computing $N_{\kappa,\xi}$, one can use the lower bound stated in Remark \ref{rem:lbalphaw} to derive the sample complexity bound. The term $\pr{\hat{\alpha}_k^n> 1-\delta}$ can be bounded either using Hoeffding inequality or may not need to be bounded at all if $\hat{\alpha}_k^n$ is a constant. On the other hand, $\pr{\rho(\hat{T}_k^n(x^*),T(x^*))+\hat{\zeta}_k^n> \epsilon}$ can be bounded by bounding the biasedness introduced due to the random operator and using the empirical process theory, which requires knowledge of the covering number of the space $\{\hat T^n_k(x^*)\}$. Sample complexity results for various random operators defined the context of empirical value iteration are discussed in \cite{sharma2019approximate,haskell2016,haskell2017randomized,haskell2019universal,gupta2015edp,munos2008finite}, among several others.}

\section{Application to Fitted Value Iteration}\label{sec:mdps}
We now apply the general framework developed above to derive the probabilistic fixed point property of the fitted value iteration for continuous state continuous action MDPs. In this setting, the MDP model and the cost function are known; further, there is a generative model (a simulator) of the MDP that generates actuation noises which can be used to generate the next state and the cost incurred given the current state-action pair. Fitted value iteration is very useful for approximately solving MDPs where the state and the action spaces are high dimensional (50 or more dimensional). This problem has been studied under different names within the reinforcement learning literature. References 
\cite{kakade2003exploration,lattimore2012pac,szorenyi2014optimistic,sidford2018near} consider a finite-state finite-action setting, and refer to this algorithm as an MDP with a generative model (no function approximation is used here). Reference \cite{munos2008finite} consider the MDPs with a continuous-state finite-action setting with function approximation and refers to this algorithm as fitted value iteration. In our previous work, this algorithm has been referred to as empirical value iteration \cite{gupta2015edp,haskell2016,haskell2017randomized,haskell2019universal,sharma2019approximate,sharma2019approximately, sharma2019empirical}. In these previous works, a variety of sufficient conditions are determined under which the optimal value function of the MDP is the probabilistic fixed point of the random operators. In this section, we use the techniques developed here to show essentially the same results, albeit under different assumptions on the MDPs. 

\subsection{Fitted Value Iteration}
We consider here the same model as introduced in Examples 1 and 2 in Section \ref{sec:intro}, but with a compact state and action setting. In this setting, the value functions belong to a function space, and function approximators such as neural networks or non-parametric regression are needed to store the value function in a computer. Thus, we need to project the value functions to appropriate function approximating class. In the sequel, we used both the transition probability matrix $P(ds'|s,a)$ induced from the state space model $s' = g(s,a,z)$. 

Recall the Bellman operator $T$ from \eqref{eqn:Tdiscount} in Examples 1. In the context of fitted value iteration, we view the operator $T$ as a composition of an identity operator and the Bellman operator. Since a continuous function is difficult to store in a computer, one usually uses a parametric or a non-parametric function approximator to store the value function; such a projection approximates the identity operator. On the other hand, in empirical value iteration, the Bellman operator is approximated using the empirical mean, as is done in \eqref{eqn:hatTdiscount}.

Let $\ALP L_\infty(\ALP S)$ denote the set of all bounded measurable functions over $\ALP S$ endowed with the supremum norm and $\ALP V\subset\ALP L_\infty(\ALP S)$ be the set of bounded continuous functions. We define the map $\hat H^n_k:\ALP V\rightarrow\ALP L_\infty(\ALP S)$ as  
\beq{\label{eqn:hathkn}\hat H^n_k(v)(s):= \min_{a\in\ALP A} \left(c(s,a)+ \frac{\alpha}{n} \sum_{i=1}^{n} v(g(s,a,Z_{k,i}))\right).}
We note here that the range of $\hat H^n_k$ is the space of measurable functions. However, depending on the continuity property of $g$, it may well be the case that the range of $\hat H^n_k$ is within the class of continuous and bounded function $\ALP V$.

For the average cost case, we define the empirical relative Bellman operator $\hat G^n_k:\ALP V\rightarrow\ALP L_\infty(\ALP S)$ as  
\beqq{\tilde v(s) & = \min_{a\in\ALP A} \left(c(s,a)+ \frac{1}{n} \sum_{i=1}^{n} v(g(s,a,Z_{k,i}))\right)}
\beqq{
\hat G^n_k(v)(s) & = \tilde v(s) - \inf_{s'\in\ALP S}\tilde v(s').}
The operator $\hat G^n_k$ is the empirical relative value iteration operator, which was introduced in our earlier work \cite{gupta2015edp}.
 
Let $\hat \Pi^m_k:\ALP L_\infty(\ALP S)\rightarrow \ALP V$ denote the projection operator that maps the output of $\hat H^n_k$ to a continuous and bounded function using a parametric regression model \cite{haskell2017randomized,haskell2019universal} or a non-parametric regression model \cite{shah2018q,sharma2019approximate}. Typical parametric regression functions include neural networks \cite{gyorfi2002principles,hanin2019universal} or reproducing kernel Hilbert spaces (RKHS) \cite{steinwart2008support}. For non-parametric regression functions, nearest neighbor or kernel estimates are used, as done in \cite{shah2018q,sharma2019approximate}.

In practice, the projected function $\hat \Pi^m_k(v)$ is constructed as follows: Let $\ALP F_m\subset\ALP V$ be a function approximating class. Here, $m$ can represent the number of data points used for nearest neighbor function approximation, number of kernels or data points for kernel-based function approximators, the number of basis functions used in RKHS, or the number of perceptrons in the neural network. The algorithm generates $m$ uniformly distributed i.i.d. samples of the states $\{s_{k,i}\}_{i=1}^m$ and creates a dataset $\{(s_{k,i},v(s_{k,i})\}_{i=1}^m$. This dataset is fitted as
\beqq{\hat\Pi^m_k(v) = \underset{f\in\ALP F_m}{\arg\min}\; L(v,f),}
where $L:\ALP V\times\ALP F_m\to\Re$ is a loss function that measures some notion of the distance between $v\in\ALP V$ and $f\in\ALP F_m$. A typical loss function is the usual weighted $\ell_2$-norm with weights $(\tau_i)_{i=1}^m$:
\beqq{L(v,f):=\frac{1}{m} \sum_{i=1}^m \tau_i\Big(v(s_{k,i}) - f(s_{k,i})\Big)^2.}
Due to the well-known universal function approximating properties of the aforementioned function classes under supremum norm, one can show that $\hat \Pi^m_k$ satisfies certain reasonable assumptions that are listed in Assumption \ref{assm:project} below.

Let $n_1(n)$ and $n_2(n)$ be monotonically increasing functions of $n$ taking values in the space of natural numbers. The fitted value iteration operator is defined as $\hat T^n_k(v) := \hat \Pi^{n_2(n)}_k\circ \hat H^{n_1(n)}_k(v)$. The relative fitted value iteration map is defined as $\hat T^n_k(v) := \hat \Pi^{n_2(n)}_k\circ \hat G^{n_1(n)}_k(v)$. We study their probabilistic fixed point properties next.

\subsection{Fitted Value Iteration for Discounted MDPs}\label{sub:discount}
Let $\rho_S$ denote a metric on $\ALP S$, $\rho_A$ denote a metric on $\ALP A$ and $\rho_{S\times A}$ denote the metric on $\ALP S\times\ALP A$ defined as 
  \beqq{\rho_{S\times A}((s,a),(s',a')) = \rho_S (s,s')+\rho_A(a,a').}
We place the following assumption on the MDP, which is motivated from \cite{hinderer2005lipschitz}.
\begin{assumption}\label{assm:lipmdp}
The following holds:
 \begin{enumerate}[(i)]
  \item $\ALP S,\ALP A$ are compact subsets of Euclidean spaces.
  \item $g$ is Lipschitz continuous in $(s,a)\in\ALP S\times\ALP A$ for every $z\in\ALP Z$ with Lipschitz coefficient $L_g(z)$ and $L_P:=\int L_g(z)\pr{dz}<1/\alpha$;
  \item The cost function $c(s,a)$ is Lipschitz continuous with Lipschitz coefficient $L_c$.
 \end{enumerate}
\end{assumption}

We now make the following assumption on the projection operator.
\begin{assumption}\label{assm:project}
The projection operator $\hat \Pi^n_k:\ALP L_\infty(\ALP S)\rightarrow \ALP V$ satisfies two conditions:
\begin{enumerate}[(i)]
 \item $\hat \Pi^{n}_k$ is non-expansive with a bias, that is, $\|\hat \Pi^{n}_k(v_1) - \hat \Pi^{n}_k(v_2)\|_\infty\leq \|v_1-v_2\|_\infty+\hat\zeta^n_k$, where $\pr{\lim_{n\to\infty}\hat\zeta^n_k = 0 } = 1$ for all $k\in\Na$ and $\zeta^n_k\leq \bar\zeta$ for all $n,k\in\Na$ almost surely.
\item For any $\epsilon>0$, and $\delta>0$, there exists $N_2:=N_2(v^*,\epsilon,\delta)$ such that 
\beqq{\pr{\|\hat \Pi^{n}_k(v^*) - v^*\|_\infty>\epsilon}<\delta \text{ for all } n\geq N_2.}
\end{enumerate}
\end{assumption}
Consider Assumption \ref{assm:project}(i) and note the following inequality:
\begin{align*}
    &\|\hat \Pi^n_k(v_1) - \hat \Pi^n_k(v_2)\|_\infty \leq \|v_1 - v_2\|_\infty \\
    &+ \underbrace{\|\hat \Pi^n_k(v_1) - v_1\|_\infty + \|\hat \Pi^n_k(v_2) - v_2\|_\infty}_{\hat \zeta^n_k}
\end{align*}
For many function approximating class such as a neural network, k-nearest neighbor, and RKHS, the non-expansiveness of the projection operation holds under the limit where (a) number of samples used for function fitting goes to infinity, (b) the size of the function approximating class goes to infinity, (c) the function approximating class contains the value functions throughout the simulation. 

To establish that Assumption \ref{assm:project} (ii) holds in a problem, we need to exploit some property of the optimal value function (such as Lipschitz or smoothness \cite{hanin2019universal}). \new{In this case, if the loss function $L(\cdot,\cdot)$ is convex in the parameters used in the function approximating class $\ALP F_m$, then stochastic gradient descent could yield the minimizing function in the function approximating class (sometimes using decreasing stepsizes). For situations where the loss function is non-convex, the estimation of $\|\hat \Pi^n_k(v) - v\|_\infty$ may be difficult and remains an open problem.}

With these assumptions in place, we have the following result.

\begin{theorem}\label{thm:evidiscounted}
If Assumptions \ref{assm:lipmdp} and \ref{assm:project} hold, then $v^*$ is a probabilistic fixed point of $(\hat T^n_k)_{n,k\in\Na}$.
\end{theorem}
\begin{proof}
We show in Subsection \ref{sub:evidiscounted} that $\hat T^n_k$ satisfies Assumption \ref{assm:hatt}. The result then immediately follows from Theorem \ref{thm:hath} and Theorem \ref{thm:hatt}.
\end{proof}

The above result differs from \cite[Theorem 2, Corollary 4]{munos2008finite} and is applicable to MDPs with continuous action spaces. In \cite{munos2008finite}, the authors assume discounted-average concentrability of future-state distributions (see Assumption 2 in \cite{munos2008finite}) and conducted the analysis under $\ell_p$ norm. This assumption does not hold for any stable deterministic control system (since the transition kernel is not absolutely continuous with respect to any base measure). In our result above, we are not making this strong assumption on the transition kernel. However, we make strong assumptions on the projection operator being biased non-expansive operator with respect to the $\ell_\infty$ norm, with bias term converging to 0 as the sample size diverges and function approximating class expands. We further assume $\ell_\infty$ norm over the value function space. Consequently, Theorem \ref{thm:evidiscounted} can be applied to deterministic dynamical system as well (in which case the noise is deterministic). Thus, Theorem \ref{thm:evidiscounted} and \cite[Theorem 2, Corollary 4]{munos2008finite} complement each-other.

\subsection{Fitted Value Iteration for Average Cost MDPs}\label{sub:average}
We now turn our attention to average cost MDPs and follow the same steps as in the previous section. The relative value iteration algorithm, which is used to compute the relative value function in the average cost MDP, is presented in \cite{hernandez2012adaptive,hernandez2012discrete,hernandez2012further,almudevar2014approximate}. We place the following assumptions on the average cost MDP, which are inspired by \cite{feinberg2019reduction,feinberg2018reduction}.

\begin{assumption}\label{assm:isolated}
The following holds:
 \begin{enumerate}[(i)]
  \item $\ALP S,\ALP A$ are compact subsets of Euclidean spaces. Further, there exists an isolated point $s_0\in\ALP S$ satisfying $\inf_{s\in\ALP S\setminus \{s_0\}} \|s_0-s\|>0$.
  \item $g$ is Lipschitz continuous in $(s,a)\in\ALP S\times\ALP A$ for every $z\in\ALP Z$ with Lipschitz coefficient $L_g(z)$ and $L_P:=\int L_g(z)\pr{dz}<1$;  
  \item The cost function $c(s,a)$ is Lipschitz continuous with Lipschitz coefficient $L_c$.
  \item There exists $\alpha\in(0,1)$ such that
    \beqq{& \inf_{(s,a)\in\ALP S\times\ALP A} P(s_0|s,a)   = 1-\alpha \\
    \Longleftrightarrow & \sup_{(s,a)\in\ALP S\times\ALP A} P(\ALP S\setminus\{s_0\}|s,a)  =\alpha.}
\end{enumerate}
\end{assumption}

We focus our attention on empirical relative value iteration for the average cost MDP with an isolated state. 

\begin{theorem}\label{thm:eviaverage}
Suppose that the average cost MDP satisfies Assumption \ref{assm:isolated} with $L_P<1$ and Assumption \ref{assm:project} with $\|\cdot\|_\infty$ replaced with $\sp(\cdot)$, then  $v^*$ is a probabilistic fixed point of $(\hat T^n_k)_{n,k\in\Na}$.
\end{theorem}
\begin{proof}
We show in Subsection \ref{sub:eviaverage} that $\hat T^n_k$ satisfies Assumption \ref{assm:hatt}. The result then immediately follows from Theorem \ref{thm:hath} and Theorem \ref{thm:hatt}.
\end{proof}

To the best of our knowledge, the probabilistic fixed point property of fitted relative value iteration for average cost MDPs with continuous state and action spaces has not been established before. For finite state and action space, this property was established in our earlier work \cite{gupta2015edp}.

\subsection{Proof of Theorem \ref{thm:evidiscounted}}\label{sub:evidiscounted}
It is also clear that $\ALP V$, the space of continuous and bounded value functions over $\ALP S$ endowed with the sup norm, is a complete metric space and that the Bellman operator $T$ is a contraction operator with fixed point $v^*$. Due to the construction of the empirical Bellman operator, we conclude that $\hat T^n_k$ is independent and identically distributed. Every realization of the empirical Bellman operator $\hat H^n_k$ is a contraction operator with contraction coefficient $\alpha$ for all $n,k\in\Na$. This yields together with Assumption \ref{assm:project}(i) that $\hat T^n_k$ is almost surely a contraction for all $n,k\in\Na$. This further yields
\beqq{\|\hat T^n_k(v^*) - T(v^*)\|_\infty \leq 2\alpha \|v^*\|_\infty +\bar\zeta.}

Thus, to establish Theorem \ref{thm:evidiscounted}, we only need to show that Assumption \ref{assm:hatt}(v) is satisfied. We establish this fact next.
\begin{proposition}\label{prop:empiricalmdp}
If Assumption \ref{assm:lipmdp} is satisfied, then for any $\epsilon,\delta>0$, there exists $N_1:=N_1(v^*,\epsilon,\delta)$ such that
\beqq{\pr{\|\hat H^n_k(v^*) - T(v^*)\|_\infty\geq \epsilon}<\delta \text{ for all } n\geq N_1.}
\end{proposition}
\begin{proof}
If Assumption \ref{assm:lipmdp} holds, then $v^*$ is a Lipschitz continuous function with Lipschitz coefficient $L_{v^*} \leq \frac{L_c}{1-\alpha L_P}$ from \cite[Theorem 4.2(c), p.13]{hinderer2005lipschitz}. To establish the result, we need to introduce some notations. Let $\ALP D$ be the set of functions given by
\beqq{\ALP D & = \Big\{d: \ALP Z\rightarrow\Re: \exists (s,a) \text{ such that } d(z) = v^*(g(s,a,z))\Big\}\\
&  = \bigcup_{(s,a)} \Big\{v^*(g(s,a,\cdot))\Big\}.} 
We note that 
\beqq{\|& \hat H^n_k(v^*) - T(v^*)\|_\infty \\
=&  \max_{s\in\ALP S}\Bigg| \min_{a\in\ALP A} \left(c(s,a)+ \frac{\alpha}{n} \sum_{i=1}^{n} v^*(g(s,a,Z_{k,i}))\right) \\
& \qquad - \min_{a\in\ALP A} \left(c(s,a)+ \alpha \int v^*(g(s,a,z)) \pr{dz}\right)\Bigg|,\\
\leq & \sup_{s\in\ALP S,a\in\ALP A} \Bigg| \frac{1}{n} \sum_{i=1}^{n} v^*(g(s,a,Z_{k,i})) - \int v^*(g(s,a,z)) \pr{dz}\Bigg|,\\
 = & \sup_{d\in\ALP D} \Bigg| \frac{1}{n} \sum_{i=1}^{n} d(Z_{k,i}) - \int d(z) \pr{dz}\Bigg|.}
This immediately yields
\beqq{& \pr{\|\hat H^n_k(v^*) - T(v^*)\|_\infty>\epsilon} \\
& \leq \pr{\sup_{d\in\ALP D} \Bigg| \frac{1}{n} \sum_{i=1}^{n} d(Z_{k,i}) - \int d(z) \pr{dz}\Bigg|>\epsilon}.}
To obtain bounds on the right side of the equation above, we need to show that the bracketing number\footnote{We refer the reader to \cite{van1996weak} for a definition and discussion about bracketing number.} of the class of functions $\ALP D$ is finite for every $\epsilon>0$. The finiteness of bracketing number is established in \cite[Theorem 2.7.11, p. 164]{van1996weak}. To see this, let us write $d_{s,a}(z) = v^*(g(s,a,z))$. Since $v^*$ and $g$ are Lipschitz continuous functions, we have 
\beqq{|d_{s,a}(z) - d_{s',a'}(z)|\leq L_{v^*}L_g(z)\;\rho_{S\times A}((s,a),(s',a')).}
This immediately implies that the bracketing number of $\ALP D$ is upper bounded by the covering number of $\ALP S\times\ALP A$ by \cite[Theorem 2.7.11, p. 164]{van1996weak}. Since $\ALP S\times\ALP A$ is compact, it is totally bounded; thus, its covering number is finite for any $\epsilon>0$. 


We now invoke \cite[Theorem 2.4.1, p. 122]{van1996weak} to conclude that for any $k\in\Na$,
\beqq{\lim_{n\rightarrow\infty}\pr{\sup_{d\in\ALP D} \Bigg| \frac{1}{n} \sum_{i=1}^{n} d(Z_{k,i}) - \int d(z) \pr{dz}\Bigg|>\epsilon} = 0,}
which yields the desired result.
\end{proof}

\begin{remark}
If one knows the exact covering number of $\ALP S\times\ALP A$ under the metric $\rho_{S\times A}$, then the sample complexity bound can also be given. For more information on covering and bracketing numbers, we refer the reader to \cite{van1996weak,pollard1984convergence,anthony2009neural}. Once we know the bracketing number of the function class $\ALP D$ under the sup norm, then we can apply Lemma 9.1 of \cite{gyorfi2002principles} to derive the sample complexity result for the EDP algorithm.
\end{remark}

\subsection{Proof of Theorem \ref{thm:eviaverage}}\label{sub:eviaverage}

The proof of this result follows from minor modifications in the proof of Theorem \ref{thm:evidiscounted} above. We define $\ALP V/\sim$ to be the quotient space 
\beqq{\ALP V/\sim = \Big\{[v]\subset C_b(\ALP S)& : v_1,v_2\in[v] \text{ implies } \\
& v_1-v_2 \text{ is a constant function} \Big\}.}
Under these assumptions, we establish the following result.
\begin{proposition}\label{prop:avgmdp}
The quotient space $\ALP V/\sim$ is a Banach space. If Assumption \ref{assm:isolated} holds, then we have
\begin{enumerate}[(a)]
 \item $T:\ALP V/\sim\to\ALP V/\sim$ is a contraction operator with coefficient $\alpha$ and $[v^*]\in\ALP V/\sim$. 
 \item Any representative element $\tilde v^*\in[v^*]$ satisfies $\sp(\tilde v^*)\leq \frac{\sp(c)}{1-\alpha}$.
 \item If in addition, Assumption \ref{assm:lipmdp} holds with $L_P<1$, then any representative element $v^*$ that lies within the unique equivalence class of the fixed points of $T$ is Lipschitz continuous with Lipschitz coefficient $\frac{L_c}{1-L_P}$.
\end{enumerate}
\end{proposition}
\begin{proof}
The fact that the quotient space $\ALP V/\sim$ is a Banach space is established in \cite[Theorem 6.26, p. 151]{almudevar2014approximate}. The operator $T$ is a span norm contraction, which is established in \cite[Theorem 6.28, p. 154]{almudevar2014approximate}. The only thing that needs to be established is that the value function lies in the class of continuous functions. Since our action space is compact, the continuity of the sequence of value functions generated by the value iteration algorithm follows from \cite[Lemma 4.3]{maitra1968discounted} or from Berge's maximum theorem \cite[Theorem 17.31, p. 570]{ali2006}. The Lipschitz continuity of $v^*$ follows from \cite[Theorem 4.2(c), p.13]{hinderer2005lipschitz}.
\end{proof}

In what follows, we drop the notation $[v]$ to denote an element of $\ALP V/\sim$ and instead just use $v$, with the understanding that any representative element $\tilde v\in[v]$ can be taken to perform the operation involved. Due to Proposition \ref{prop:avgmdp} above, we conclude that Assumption \ref{assm:hatt}(i)-(iii) holds. Using Proposition \ref{prop:avgmdp}(c) along with Proposition \ref{prop:empiricalmdp} allows us to conclude that Assumption \ref{assm:hatt}(v) holds. Assumption \ref{assm:hatt}(vi) holds since 
\beqq{& \sp(\hat T^n_k(v^*), T(v^*)) \leq \sp(\hat T^n_k(v^*), \hat T^n_k(0))  \\
&+ \sp(\hat T^n_k(0), T(0)) + \sp(\sp(T(0), T(v^*)) \\
& \leq 2\:\sp(v^*),}
where we used Theorem 13.1 of \cite[p. 281]{almudevar2014approximate}.

We next show that Assumption \ref{assm:hatt}(iv) also holds by establishing the following property of $\hat G^n_k$ in the following lemma.
\begin{lemma}
Suppose that Assumption \ref{assm:isolated} holds for the average cost MDP. For any $v_1,v_2\in\ALP V$, then the following holds:
 \beqq{\sp(\hat G^n_k(v_1),\hat G^n_k(v_2))\leq \hat \alpha^n_k \sp(v_1,v_2).}
 where $\alpha^n_k$ satisfies the following condition: For any $\delta\in(0,1-\alpha)$, we have 
 \beq{\lim_{n\rightarrow\infty}\pr{\hat\alpha^n_k>1-\delta} = 0.\label{eqn:hatalphaavg}}
 Further, $\hat \alpha^n_k\leq 1$ almost surely.
\end{lemma}
\begin{proof}
It is clear that $\hat\alpha^n_k$ satisfies
\beqq{\hat\alpha^n_k \leq 1 - \inf_{(s,a)} \hat P^n_k(s_0|s,a).}
The fact that $\hat\alpha^n_k\leq 1$ almost surely follows from Theorem 4.22 in \cite[p. 81]{almudevar2014approximate}. The statement will be proved if we show that
\beqq{\lim_{n\rightarrow\infty}\pr{\inf_{(s,a)} \hat P^n_k(s_0|s,a)>\delta} = 0.}
Let $\ALP K\subset\ALP S\times\ALP A$ be the countable dense subset of $\ALP S\times\ALP A$ and define $\ALP Z_0$ to be the set
\beqq{\ALP Z_0 = \bigcap_{(s,a)\in\ALP K}\Big\{z\in\ALP Z: g(s,a,z) = s_0\Big\}. }
Then, $\ALP Z_0$ is measurable and $\pr{\ALP Z_0} = 1-\alpha$ due to Assumption \ref{assm:isolated} (ii) and (iv). This implies that 
$\inf_{(s,a)\in\ALP S\times\ALP A} \hat P^n_k(s_0|s,a) = \hat P^n_k(\ALP Z_0)$. Using Hoeffding inequality, we conclude that
\beqq{& \pr{ \hat P^n_k(\ALP Z_0) -(1- \alpha) <\alpha+\delta-1}  \\
&\leq \exp(-n(\alpha+\delta-1)^2).}
This readily yields the \eqref{eqn:hatalphaavg}, which establishes the lemma.
\end{proof}

\section{Proofs of the Main Results}\label{sec:proofs}
\begin{figure*}[!t]
\centering
\scalebox{0.65}{\begin{tikzpicture}
\node[state, fill = magenta!30!white] (0) {$\eta$};
\node[state,right=of 0, fill = blue!30!white] (1) {$\eta+1$};
\node[state,right=of 1, fill = blue!30!white] (2) {$\eta+2$};
\node[draw=none,right=of 2] (2-g) {...};
\node[state,right=of {2-g},text depth=0pt, fill = blue!30!white] (w) {$\eta+\beta$};
\node[state,right=of {w},text depth=0pt, fill = green!30!white] (wp1) {$\eta+\beta+1$};
\node[draw=none,right=of wp1] (3-g) {...};
\node[state,right=of {3-g},text depth=0pt, fill = green!30!white] (2w) {$\eta+2\beta$};
\node[draw=none,right=of 2w] (4-g) {...};
\node[state,right=of {4-g},text depth=0pt, fill = red!30!white] (3w) {$\eta+3\beta$};

\draw[ >=latex,
    auto=right, loop above/.style={out=75,in=105,loop}, every loop]
     (w)    edge                node {$p$} (2-g)
     (1)    edge[bend left]     node[above] {$1-p$} (wp1)
     (2-g)  edge                node[above] {$p$}   (2)
     (2-g) edge[bend left]      node {$1-p$} (3-g)
     (2)   edge[bend left]  node [above]{$1-p$} (3-g)
     (w) edge[bend right]      node {$1-p$} (2w)
     (3-g)    edge[bend left] node[above] {$1-p$} (4-g)
     (2w)    edge[bend right] node[above] {$1-p$} (3w)
     (wp1)    edge[bend left] node[above] {$1-p$} (4-g)
     (0)    edge[bend right] node[below] {$1-p$} (w)
     
     (wp1)   edge             node {$p$}   (w)
     (2)   edge             node {$p$}   (1)
     (1)   edge             node {$p$}   (0)
     (0)   edge[loop above] node {$p$}   (0);
\end{tikzpicture}}
\caption{\label{fig:mcy} An illustration of the Markov chain $Y^n_k$, where $\eta = \left\lceil\frac{2}{\delta}\right\rceil $.}
\hrulefill
\end{figure*}

We now turn our attention to proving Theorem \ref{thm:hatt}. Define the error $E^n_k:=\rho\Big(\hat X^n_k,x^\star\Big)$. We focus here on the error process $(E^n_k)_{k\in\Na}$ and find an upper bound on the probability of large error for a given $n\in\Na$.


\subsection{Proof Technique}
Given two random variables $X$ and $Y$ defined on the same probability space, $X$ stochastically dominates $Y$ if $\pr{X>q} \geq \pr{Y>q}$ for all $q\in\Re$ \cite{shaked2007}. To prove Theorem \ref{thm:hatt}, we show that the error $E^n_k$ is stochastically dominated by a scaled version of a Markov chain constructed over the space of natural numbers. We prove that if $n$ is sufficiently large, then the dominating Markov chain has an invariant distribution, which allows us to compute an upper bound on the probability of asymptotic error to be greater than $\kappa$. Through this approach, we can also compute the rate of convergence as $n\rightarrow\infty$. We now introduce some notation and proof technique in greater details below.

The error evolution can be written as 
\beqq{E^n_k & = \rho(\hat X^n_k , x^\star ) = \rho(\hat T^n_{k-1}(\hat X^n_{k-1}) , x^\star ),\\
&  \leq  \rho(\hat T^n_{k-1}(\hat X^n_{k-1}),\hat T^n_{k-1}(x^\star))+\rho(\hat T^n_{k-1}(x^\star),T(x^\star))\\
& \leq \hat \alpha^n_{k-1} E^n_{k-1} + W_{k-1}^n }
where $\hat \alpha^n_{k-1}$ denotes the contraction coefficient of $\hat T^n_{k-1}$ and 
\beqq{W^n_{k-1}:=\rho(\hat T^n_{k-1}(x^\star),T(x^\star)) +\hat \zeta^n_k.}
We note here that by Assumption \ref{assm:hatt}(vi), $W^n_k\leq \bar w$ almost surely for any $n\in\Na$ and $k\in\Na$.
\begin{remark}\label{rem:alphaw}
Note that $(\hat \alpha^n_k, W^n_k)$ are functions of $\hat T^n_k$. Since $\hat T^n_k$ is not correlated with $\hat T^n_j$ for any $j\neq k$, we conclude that $\big(\hat \alpha^n_k, W^n_k\big)_{k\in\Na}$ is a sequence of i.i.d. tuple of random variables. However, for every $k\in\Na$, $\hat\alpha^n_k$ is correlated with $W^n_k$.
\end{remark}

We now focus on devising a Markov chain over the space of natural numbers that dominates the error process. Fix $\kappa>0$ and pick $\epsilon\in\big(0,\frac{\kappa(1-\alpha)}{2}\big],\delta\in(0,1-\alpha)$ such that $\left\lceil\frac{2}{\delta}\right\rceil \leq \frac{\kappa}{\epsilon}$. For this choice of $\epsilon$ and $\delta$, pick $\eta^n_{\epsilon,\delta} :=\left\lceil\frac{2}{\delta}\right\rceil$ and $p^n_{\epsilon,\delta}$ such that $p^n_{\epsilon,\delta}\leq \pr{\hat\alpha^n_k\leq 1-\delta,W^n_k \leq \epsilon}$. Recall that $\beta:=\left\lceil\frac{\bar w}{\epsilon}\right\rceil$.  

We now define a Markov chain $(Y^n_k)_{k\in\Na}$ on the set of natural numbers as follows: Let $Y^n_1 =\lceil E^n_1/\epsilon\rceil$, and assume that the chain evolves as
\beq{\label{eqn:ynk1} Y^n_{k+1} = 
\left\{\begin{array}{ll} 
\eta^n_{\epsilon,\delta}        & \text{ with probability } p^n_{\epsilon,\delta} \text{ if } Y^n_k = \eta^n_{\epsilon,\delta}\\ 
Y^n_k - 1                       & \text{ with probability } p^n_{\epsilon,\delta} \text{ if } \\
& Y^n_k \geq  \eta^n_{\epsilon,\delta}+1\\
Y^n_k+\beta & \text{ with probability } 1-p^n_{\epsilon,\delta}\end{array}\right.}
\new{Figure \ref{fig:mcy} depicts the communication structure of this Markov chain. }

If $n$ is sufficiently large such that $p^n_{\epsilon,\delta}$ satisfies $p^n_{\epsilon,\delta}> \beta/(\beta+1)$, then we show that $Y^n_k$ admits a unique invariant distribution since it is irreducible and has a negative drift. Further, we prove that at every step of the iteration $k$, $\epsilon Y^n_k$ stochastically dominates the error random variable $E^n_k$, that is, for any real number $q\in[0,\infty)$ and $k\in\Na$,
\beqq{\pr{\epsilon Y^n_k> q} \geq \pr{E^n_k> q}.} 
This yields for every $k\in\Na$, we get
\beqq{\pr{E^n_k>\kappa} \leq \pr{\epsilon Y^n_k> \kappa}\leq \pr{Y^n_k> \eta^n_{\epsilon,\delta}}.}
Let $\pi^n$ denote the invariant distribution of the Markov chain $(Y^n_k)_{k\in\Na}$. Then, the above inequality implies
\beqq{ \limsup_{k\rightarrow\infty} \pr{E^n_k>\kappa}\leq \lim_{k\rightarrow\infty} \pr{Y^n_k> \eta^n_{\epsilon,\delta}} = 1-\pi^n(\eta^n_{\epsilon,\delta}).}
Further, as $n$ grows, we show that the invariant distribution at $\eta^n_{\epsilon,\delta}$, $\pi^n(\eta^n_{\epsilon,\delta})$, converges to 1, thereby proving the convergence of the error process $E^n_k$ to 0 in probability as $k\rightarrow\infty$ and $n\rightarrow\infty$.

\subsection{Dominating the Error with the Markov Chain \texorpdfstring{$(Y^n_k)$}{}}
Recall that the error evolves as $E^n_{k+1} \leq \hat\alpha^n_k E^n_k+W^n_k$, where $(\hat \alpha^n_k,W^n_k)_{k\in\Na}$ is an i.i.d. sequence of random variables.
\begin{proposition}\label{PROP:DOMINATE}
Let $n$ be large such that $\pr{\hat\alpha^n_k\leq 1-\delta,W^n_k \leq \epsilon}>1/2$. Pick $p^n_{\epsilon,\delta}\in(0.5,1)$ such that $p^n_{\epsilon,\delta}\leq \pr{\hat\alpha^n_k\leq 1-\delta,W^n_k \leq \epsilon}$ and consider the Markov chain $(Y^n_k)_{k\in\Na}$ constructed in \eqref{eqn:ynk1}. If $\epsilon Y^n_1\geq E^n_1$, then at every iteration $k$, $\epsilon Y^n_k$ stochastically dominates $E^n_k$. In other words, for any $k\in\Na$ and any real number $q\in[0,\infty)$,
\beqq{\pr{\epsilon Y^n_k> q} \geq \pr{E^n_k> q}.} 
\end{proposition}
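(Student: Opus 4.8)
The plan is to upgrade the distributional claim to a pathwise one: I would build a single probability space carrying coupled copies of both processes on which $E^n_k \le Z^n_k \le \epsilon Y^n_k$ holds almost surely for every $k$, and then note that a pointwise inequality $E^n_k \le \epsilon Y^n_k$ immediately yields $\{E^n_k \ge q\}\subseteq\{\epsilon Y^n_k\ge q\}$, hence $\pr{\epsilon Y^n_k\ge q}\ge\pr{E^n_k\ge q}$, since the coupled copy of $(Y^n_k)$ has the same law as the chain \eqref{eqn:ynk1}. The coupling is driven by the i.i.d. sequence $(\hat\alpha^n_k,W^n_k)_{k\in\Na}$ already generating $E^n_{k+1}\le Z^n_{k+1}=\hat\alpha^n_k E^n_k+W^n_k$, with $W^n_k=\rho(\hat T^n_k(x^\star),T(x^\star))$. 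Introduce the \emph{good event} $G_k:=\{\hat\alpha^n_k\le 1-\delta,\ W^n_k\le\epsilon\}$, which by the standing hypothesis has $\pr{G_k}\ge p:=p^n_{\epsilon,\delta}$. Using an auxiliary independent uniform variable I would thin $G_k$ to a sub-event $\tilde G_k\subseteq G_k$ of probability exactly $p$, and then drive $(Y^n_k)$ by taking the downward move on $\tilde G_k$ and the upward jump of size $\lceil\bar w/\epsilon\rceil$ on $\tilde G_k^c$; since $(\hat\alpha^n_k,W^n_k)$ and the auxiliary variables are i.i.d.\ across $k$, this reproduces exactly the transition law \eqref{eqn:ynk1}. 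The induction base is the hypothesis $\epsilon Y^n_1\ge E^n_1$.

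For the inductive step, assume $E^n_k\le\epsilon Y^n_k$. On $\tilde G_k\subseteq G_k$ one has $\hat\alpha^n_k\le 1-\delta$ and $W^n_k\le\epsilon$, so $E^n_{k+1}\le(1-\delta)\epsilon Y^n_k+\epsilon=\epsilon\big[(1-\delta)Y^n_k+1\big]$, and it remains to check the arithmetic inequality $(1-\delta)Y^n_k+1\le Y^n_{k+1}$. When $Y^n_k\ge\eta^n_{\epsilon,\delta}+1$ the chain moves to $Y^n_k-1$ and the inequality reduces to $\delta Y^n_k\ge 2$, which holds because $Y^n_k\ge\eta^n_{\epsilon,\delta}+1>2/\delta$; when $Y^n_k=\eta^n_{\epsilon,\delta}$ the chain stays and the inequality reduces to $\delta\eta^n_{\epsilon,\delta}\ge 1$, which holds since $\eta^n_{\epsilon,\delta}=\lceil 2/\delta\rceil\ge 2/\delta$. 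This is precisely the computation that forces the choice $\eta^n_{\epsilon,\delta}=\lceil 2/\delta\rceil$.

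The delicate case, and the main obstacle, is the bad event $\tilde G_k^c$: there I have no contraction estimate on $\hat\alpha^n_k$, yet the dominating chain is permitted only the bounded upward jump $\epsilon\lceil\bar w/\epsilon\rceil\ge\bar w$. Here I would invoke the almost-sure bound $W^n_k\le\bar w$ from Assumption \ref{assm:hatt}(iv) together with the non-expansiveness $\hat\alpha^n_k\le 1$ (valid in the span-seminorm and discounted settings of interest), which gives $E^n_{k+1}\le\hat\alpha^n_k E^n_k+W^n_k\le E^n_k+\bar w\le\epsilon Y^n_k+\epsilon\lceil\bar w/\epsilon\rceil=\epsilon Y^n_{k+1}$. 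Some almost-sure upper bound on $\hat\alpha^n_k$ is genuinely indispensable here: a large multiplicative blow-up on a low-probability event would fatten the tail of $E^n_{k+1}$ beyond that of the bounded jump and destroy any pathwise (hence stochastic) domination, so isolating and justifying this bound is the crux, while the remaining steps are bookkeeping. Completing the induction delivers $E^n_k\le\epsilon Y^n_k$ almost surely, and therefore $\pr{\epsilon Y^n_k\ge q}\ge\pr{E^n_k\ge q}$ for all $q\in[0,\infty)$ and all $k\in\Na$, as claimed.
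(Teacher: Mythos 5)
Your proof is correct, but it takes a genuinely different route from the paper's. The paper never constructs a coupling: it introduces the auxiliary process $Z^n_{k+1}=\hat\alpha^n_k E^n_k+W^n_k$ and proves by induction the purely distributional statement that $\epsilon Y^n_k$ stochastically dominates $Z^n_k$, comparing tail probabilities $\pr{\cdot\geq q}$ over three ranges of $q$ (below $\epsilon\eta^n_{\epsilon,\delta}$, between $\epsilon\eta^n_{\epsilon,\delta}$ and $\epsilon\big(\eta^n_{\epsilon,\delta}+\lceil\bar w/\epsilon\rceil\big)$, and above), with a separate lemma giving $\pr{Y^n_k\geq q}\geq 1-p^n_{\epsilon,\delta}$ in the middle range, and a case split on whether $E^n_k\geq\epsilon\eta^n_{\epsilon,\delta}$. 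Your pathwise construction --- thinning the good event $G_k$ to probability exactly $p^n_{\epsilon,\delta}$ with an auxiliary uniform, driving $(Y^n_k)$ by the thinned indicators (which are i.i.d.\ and independent of the past, so the coupled chain has exactly the law \eqref{eqn:ynk1}), and proving $E^n_k\leq\epsilon Y^n_k$ along every sample path --- buys something real: the paper's Step 2(a) infers dominance of $Z^n_{k+1}$ by $\epsilon Y^n_{k+1}$ from dominance of the increments together with dominance of the current values, an implication that does not hold for arbitrary joint laws and implicitly requires exactly the kind of joint construction you supply; your argument closes that gap. The arithmetic core is the same in both proofs: on the good event, $\delta\eta^n_{\epsilon,\delta}\geq 2$ makes the contraction beat the chain's unit decrement, and on the bad event the jump $\epsilon\lceil\bar w/\epsilon\rceil$ absorbs the worst case. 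Two remarks. First, your reading of the hypothesis as a bound on $\pr{\hat\alpha^n_k\leq 1-\delta,\ W^n_k\leq\epsilon}$ is the intended one: the ``$Z^n_k\leq\epsilon$'' in the statement is a notational slip (the paper's own proof works with the event $\{W^n_k\leq\epsilon,\hat\alpha^n_k\leq 1-\delta\}$, and Lemma \ref{lem:frechethoeffding} silently switches between $Z^n_k$ and $W^n_k$). Second, you are right that an almost-sure bound $\hat\alpha^n_k\leq 1$ is indispensable and does not appear in Assumption \ref{assm:hatt}; this is not a defect of your proof relative to the paper's, since the paper uses it tacitly in both bad-event estimates (writing $1-\delta\leq\hat\alpha^n_k\leq 1$ and bounding $Z^n_{k+1}\leq E^n_k+W^n_k$) --- you have merely made explicit an assumption the paper leaves implicit.
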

\begin{proof}
See Appendix \ref{app:dominate}.
\end{proof}

In the light of the proposition above, we need to identify a lower bound on the joint distribution $\pr{\hat\alpha^n_k\leq 1-\delta,W^n_k \leq \epsilon}$ that can be used to determine $p^n_{\epsilon,\delta}$. We obtain a lower bound by using the Fr\'echet-Hoeffding theorem \cite[Theorem 3.1.1]{rachev1998i}:
For any $a_1,a_2\in[0,\infty)$, we have
\beq{\pr{\hat\alpha^n_k\leq a_1,W^n_k \leq a_2} \geq \pr{\hat\alpha^n_k\leq a_1}+\pr{W^n_k \leq a_2}-1.\label{eqn:frechethoeffding}}

Suppose that we know the upper bounds $\varphi_1(n,\delta)$ and $\varphi_2(n,\epsilon)$ on the probability $\pr{\hat\alpha^n_k> 1-\delta}$ and $\pr{W^n_k> \epsilon}$, respectively:
\beqq{\pr{\hat\alpha^n_k> 1-\delta}\leq \varphi_1(n,\delta),\qquad \pr{W^n_k >\epsilon}\leq \varphi_2(n,\epsilon).}
Typically, such bounds can be obtained using a concentration of measures result such as the Hoeffding inequality or the empirical process theory \cite{pollard1984convergence,van1996weak,gyorfi2002principles}. We let $p^n_{\epsilon,\delta}$ be defined as $p^n_{\epsilon,\delta} = 1-\varphi_1(n,\delta)-\varphi_2(n,\epsilon)$. This, together with \eqref{eqn:frechethoeffding}, implies
\beqq{& \pr{\hat\alpha^n_k\leq 1-\delta,W^n_k \leq \epsilon} \\
& \geq \pr{\hat\alpha^n_k \leq 1-\delta}+\pr{W^n_k \leq \epsilon}-1\\
& \geq 1-\varphi_1(n,\delta)-\varphi_2(n,\epsilon) = p^n_{\epsilon,\delta} .}
We make the following observation.

\begin{lemma}\label{lem:pn}
If Assumption \ref{assm:hatt} holds, then for any $\epsilon>0$ and $\delta\in (0,1-\alpha)$, $\lim_{n\rightarrow\infty}p^n_{\epsilon,\delta} = 1$.
\end{lemma}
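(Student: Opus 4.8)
The plan is to verify directly that the two tail quantities appearing in the definition $p^n_{\epsilon,\delta} = 1 - \gamma_1(n,\delta) - \gamma_2(n,\epsilon)$ can each be taken to vanish as $n\rightarrow\infty$; the conclusion then follows by arithmetic. The only substantive content is to match the abstract upper bounds $\gamma_1,\gamma_2$ posited above to the actual hypotheses of Assumption \ref{assm:hatt}, thereby confirming that those hypotheses are what guarantee the vanishing assumed in the construction of $p^n_{\epsilon,\delta}$.

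Concretely, I would make the natural choices $\gamma_1(n,\delta) := \pr{\hat\alpha^n_k \geq 1-\delta}$ and $\gamma_2(n,\epsilon) := \pr{W^n_k \geq \epsilon}$. The first is a legitimate bound since $\pr{\hat\alpha^n_k > 1-\delta} \leq \pr{\hat\alpha^n_k \geq 1-\delta} = \gamma_1(n,\delta)$, and because $\delta\in(0,1-\alpha)$, Assumption \ref{assm:hatt}(iii) gives $\gamma_1(n,\delta)\rightarrow 0$ as $n\rightarrow\infty$. For the second, I would recall that $W^n_k = \rho(\hat T^n_k(x^\star),T(x^\star))$, so that Assumption \ref{assm:hatt}(ii), applied at the single point $x = x^\star$, yields $\pr{\rho(\hat T^n_k(x^\star),T(x^\star))\geq\epsilon}\rightarrow 0$, i.e. $\gamma_2(n,\epsilon)\rightarrow 0$; and clearly $\pr{W^n_k>\epsilon}\leq\gamma_2(n,\epsilon)$, as required.

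Putting these together gives $p^n_{\epsilon,\delta} = 1 - \gamma_1(n,\delta) - \gamma_2(n,\epsilon)\rightarrow 1-0-0 = 1$. I expect no genuine obstacle in this lemma: it is essentially a translation of parts (ii) and (iii) of Assumption \ref{assm:hatt} into the notation of the dominating chain. The only points requiring a moment's care are the harmless passage between strict and non-strict tail events (since $\{\hat\alpha^n_k>1-\delta\}\subseteq\{\hat\alpha^n_k\geq 1-\delta\}$ and likewise for $W^n_k$), and recognizing that $W^n_k$ is precisely the quantity $\rho(\hat T^n_k(x^\star),T(x^\star))$ to which Assumption \ref{assm:hatt}(ii) applies with $x=x^\star$. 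Note also that the conclusion is uniform in $k$, because by Remark \ref{rem:alphaw} the law of the tuple $(\hat\alpha^n_k,W^n_k)$ does not depend on $k$, so the limits above hold with $n$ alone driving the convergence.
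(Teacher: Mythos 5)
Your proof is correct and follows essentially the same route as the paper: instantiate $\gamma_1(n,\delta)$ and $\gamma_2(n,\epsilon)$ as the tail probabilities of $\hat\alpha^n_k$ and $W^n_k$, and invoke Assumption \ref{assm:hatt}(iii) and Assumption \ref{assm:hatt}(ii) (the latter applied at the single point $x = x^\star$) to make both vanish as $n\rightarrow\infty$. In fact you cite the hypotheses more carefully than the paper does, since the paper's proof invokes Assumption \ref{assm:hatt}(iii) twice where the second invocation should clearly be part (ii).
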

\begin{proof}
The proof essentially follows from Assumption \ref{assm:hatt}. Assumption \ref{assm:hatt}(iv) implies that $\varphi_1(n,\delta)\rightarrow 0$ as $n\rightarrow\infty$ for any $\delta\in(0,1-\alpha)$. Assumption \ref{assm:hatt}(v) implies that $\varphi_2(n,\epsilon)\rightarrow 0$ as $n\rightarrow\infty$ for any $\epsilon>0$. The proof of the lemma is complete.
\end{proof}

According to the Lemma above, by picking $n$ sufficiently large, $p^n_{\epsilon,\delta}$ can be made as close to 1 as possible. As we show next, for $p^n_{\epsilon,\delta}$ sufficiently close to 1, the Markov chain $(Y^n_k)_{k\in\Na}$ admits an invariant distribution.

\begin{proposition}\label{PROP:INVARIANT}
If $p^n_{\epsilon,\delta}>\beta/(\beta+1)$, then the Markov chain $(Y^n_k)_{k\in\Na}$ admits a unique invariant distribution $\pi^n$. If $p^n_{\epsilon,\delta}>2\beta/(2\beta+1)$, then 
\beqq{\pi^n\big(\eta^n_{\epsilon,\delta}\big)\geq \frac{2(p^n_{\epsilon,\delta})^\beta-1}{\big(p^n_{\epsilon,\delta}\big)^\beta}.}
\end{proposition}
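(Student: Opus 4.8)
The plan is to treat $(Y^n_k)$ directly as a random walk that is skip-free downward (it decreases by exactly $1$) and makes upward jumps of a fixed size $w=\lceil\bar w/\epsilon\rceil$. Writing $p:=p^n_{\epsilon,\delta}$ and $\eta:=\eta^n_{\epsilon,\delta}$ and shifting coordinates by $m=Y^n_k-\eta$, the chain lives on $\{0,1,2,\dots\}$ with transitions: from $0$ it stays at $0$ with probability $p$ and jumps to $w$ with probability $1-p$; from $m\geq 1$ it moves to $m-1$ with probability $p$ and to $m+w$ with probability $1-p$. I would prove the bound for $\pi^n(0)=\pi^n(\eta)$ in this coordinate.

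First I would establish existence of a unique invariant distribution. The shifted chain is irreducible on $\{0,1,2,\dots\}$ (combining size-$w$ up-jumps and unit down-steps one can pass between any two states) and aperiodic (state $0$ carries a self-loop of probability $p>0$). With the Lyapunov function $V(m)=m$, the one-step mean drift equals $-p+(1-p)w$ for $m\geq 1$; the hypothesis $p>2w/(2w+1)$ yields $p-(1-p)w>w/(2w+1)>0$, so the drift is at most $-w/(2w+1)<0$ off the finite set $\{0\}$, while being finite on $\{0\}$. Foster's criterion then gives positive recurrence, hence a unique invariant distribution $\pi^n$.

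The crux is the exact evaluation of $\pi^n(0)$. Because the chain is skip-free downward, stationary probability flux balances across the cut separating $\{0,\dots,m\}$ from $\{m+1,\dots\}$: the only downward crossing is the unit step out of $m+1$, and the upward crossings are the size-$w$ jumps out of the states $j$ with $m-w+1\leq j\leq m$, so for every $m\geq 0$
\beqq{p\,\pi^n(m+1)=(1-p)\sum_{j=\max(0,m-w+1)}^{m}\pi^n(j).}
Summing over all $m\geq 0$ (Tonelli applies since all terms are nonnegative), the left side equals $p\bigl(1-\pi^n(0)\bigr)$. On the right side, a short bookkeeping shows that each index $j$ lies in exactly $w$ of the windows $[\max(0,m-w+1),\,m]$, so the double sum collapses to $(1-p)\,w\sum_{j\geq 0}\pi^n(j)=(1-p)w$. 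This gives the closed form
\beqq{\pi^n(\eta)=\pi^n(0)=\frac{p-(1-p)w}{p}.}

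Finally I would check $\tfrac{p-(1-p)w}{p}\geq\tfrac{2p^{w}-1}{p^{w}}$. Clearing denominators and substituting $1-p^{w}=(1-p)\sum_{j=0}^{w-1}p^{j}$, this reduces to $(1-p)\sum_{i=1}^{w}\bigl(p^{i}-p^{w}\bigr)\geq 0$, which is immediate since $0<p\leq 1$ forces $p^{i}\geq p^{w}$ for $i\leq w$. The main obstacle is the exact computation of $\pi^n(0)$: the combinatorial fact that every state is counted exactly $w$ times when the cut equations are summed (equivalently, that the stationary mean displacement vanishes) is what produces the clean formula, after which the stated bound is routine. I would also remark that $p>2w/(2w+1)$ is stronger than the $p>w/(w+1)$ needed merely for positive recurrence; the extra margin is precisely what forces $p^{w}>1/2$, making the asserted lower bound $\tfrac{2p^{w}-1}{p^{w}}$ strictly positive and hence non-vacuous.
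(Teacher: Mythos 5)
Your proof is correct, but it takes a genuinely different route from the paper's. The paper never computes the stationary distribution of $(Y^n_k)$ itself: it introduces a second, pathwise-dominating chain $Q$ whose up-jumps land on the next multiple of $w$ (rather than adding $w$), proves $Q$ stochastically dominates the shifted chain $P$, establishes existence of invariant measures for both chains via Foster--Lyapunov drift conditions (with $p>w/(w+1)$ sufficing for $P$ and $p>2w/(2w+1)$ for $Q$), and then solves the balance equations of $Q$ by a somewhat lengthy induction to obtain the closed form $\pi^Q(0)=\frac{2p^w-1}{p^w}$; the stated bound is then $\pi^n(\eta)=\pi^P(0)\geq \pi^Q(0)$. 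You instead exploit the skip-free-downward structure of the original chain directly: the cut-balance identity
\beqq{p\,\pi(m+1)=(1-p)\sum_{j=\max(0,m-w+1)}^{m}\pi(j),}
summed over $m\geq 0$ with each index $j$ counted exactly $w$ times, yields the \emph{exact} value $\pi^n(\eta)=\frac{p-(1-p)w}{p}$, and the algebraic comparison $p(1-p^w)\geq (1-p)wp^w$ (via the geometric-sum factorization of $1-p^w$) recovers the paper's stated bound; I verified both the cut equation and the final inequality, and they are sound. Your route is shorter, avoids the auxiliary chain and the induction, and is strictly stronger: it identifies the stationary mass at $\eta^n_{\epsilon,\delta}$ exactly, and the resulting tail bound $1-\pi^n(\eta^n_{\epsilon,\delta})\leq (1-p)w/p$ is sharper than $\frac{1-p^w}{p^w}$, so it would in fact tighten Theorem \ref{thm:hatt} and the sample-complexity bound of Subsection \ref{sub:sample}. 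What the paper's construction buys in exchange is the full invariant distribution of the dominating chain (all masses, in explicit geometric form) and an argument that stays within the stochastic-dominance framework used throughout the rest of the paper.
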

\begin{proof}
We first note that $(Y^n_k)_{k\in\Na}$ is an irreducible Markov chain, in which all integers greater than or equal to $\eta^n_{\epsilon,\delta}$ are accessible. Consider the Lyapunov function as $V(y) = y$ for all $y\in\Na$, $y\geq \eta^n_{\epsilon,\delta}$. Then, $\ex{V(Y^n_2)|Y^n_1 = y} - V(y) <0$ for all $y\geq \eta^n_{\epsilon,\delta} +1$. The uniqueness of the invariant distribution follows from \cite[Theorem 7.5.3, p.153]{douc2018markov}. See Appendix \ref{app:invariant} for a derivation on the lower bound on $\pi^n\big(\eta^n_{\epsilon,\delta}\big)$.
\end{proof}

\subsection{Proof of Theorem \ref{thm:hatt}}
We now prove Theorem \ref{thm:hatt} using Propositions \ref{PROP:DOMINATE} and \ref{PROP:INVARIANT} as follows. For $n$ sufficiently large (so that $p^n_{\epsilon,\delta}>2\beta/(2\beta+1)$), we can use Proposition \ref{PROP:DOMINATE} to conclude that for every $k\in\Na$,
\beqq{\pr{E^n_k\geq \kappa} \leq \pr{\epsilon Y^n_k\geq \kappa} \leq \pr{\epsilon Y^n_k> \epsilon\eta^n_{\epsilon,\delta}}.}
From Proposition \ref{PROP:INVARIANT}, we conclude that
\beqq{\lim_{k\rightarrow\infty}\pr{Y^n_k> \eta^n_{\epsilon,\delta}} & = 1-\pi^n(\eta^n_{\epsilon,\delta}) \leq \frac{\big(1-(p^n_{\epsilon,\delta})^\beta\big)}{(p^n_{\epsilon,\delta})^\beta},}
Consequently, we have
\beq{\label{eqn:pnw}\underset{k\rightarrow\infty}{\lim\sup}\;\;\pr{E^n_k\geq \kappa} \leq \frac{(1-\big(p^n_{\epsilon,\delta})^\beta\big)}{(p^n_{\epsilon,\delta})^\beta}.}
The proof of the theorem is complete.

\subsection{Proof of Theorem \ref{thm:hath}}\label{sub:hath}
It is easy to observe that Assumption \ref{assm:hath} immediately implies Assumption \ref{assm:hatt}(iii) and (iv). We only need to show that Assumption \ref{assm:hatt}(v) and (vi) holds.

Let us consider the case of $L=2$ for simplicity, and the general case can then be easily deduced. For any $x^\star\in\ALP X$, define $x^\star_1 := x^\star, x^\star_2 := T_1(x^\star_1)$, we have
\beqq{& \rho(\hat T^n_k(x^\star), T(x^\star)) = \rho_3\Big(\hat H^n_{k,2}\big(\hat H^n_{k,1}(x^\star_1)\big), T_2\big(T_1(x^\star_1)\big)\Big),\\
& \leq \rho_3\Big(\hat H^n_{k,2}\big(\hat H^n_{k,1}(x^\star_1)\big), \hat H^n_{k,2}\big(T_1(x^\star_1)\big)\Big) \\
& + \rho_3\Big(\hat H^n_{k,2}\big(T_1(x^\star_1)\big), T_2\big(T_1(x^\star_1)\big)\Big),\\
& \leq \hat \alpha^n_{k,2}\rho_2\Big(\hat H^n_{k,1}(x^\star_1),T_1(x^\star_1)\Big) +\hat \zeta^n_k\\
& + \rho_3\Big(\hat H^n_{k,2}\big(x^\star_2\big), T_2\big(x^\star_2\big)\Big).}
We now extend the same argument to the general case. 
Using triangle inequality and Assumption \ref{assm:hath}(ii), we get
\beq{& \rho(\hat T^n_k(x^\star), T(x^\star)) \nonumber\\
&= \rho_{L+1}(\hat H^n_{k,L}\circ\ldots\circ \hat H^n_{k,1}(x^\star),T_L\circ \ldots\circ T_1(x^\star)),\nonumber\\
& \leq \sum_{l=1}^L \left(\prod_{m=l+1}^L\hat\alpha^n_{k,m}\right)\rho_{l+1}\Big(\hat H^n_{k,l}(x^\star_l),T_l(x^\star_l)\Big) +\hat\zeta^n_k,\label{eqn:step1}\\
& \leq \sum_{l=1}^L\rho_{l+1}\Big(\hat H^n_{k,l}(x^\star_l),T_l(x^\star_l)\Big) +\hat\zeta^n_k,\label{eqn:step2}}
where we take $\prod_{m=L+1}^L\hat\alpha^n_{k,m} = 1$. The last inequality immediately yields for any $\epsilon>0$,
\beqq{& \pr{\rho\Big(\hat T^n_k(x^\star),T(x^\star)\Big)\geq \epsilon} \\
& \leq \sum_{l=1}^{L-1} \pr{\rho_{l+1}\Big(\hat H^n_{k,l}(x^\star_l),T_l(x^\star_l)\Big) \geq \frac{\epsilon}{L}}\\
& +\pr{\rho_{L+1}\Big(\hat H^n_{k,L}(x^\star_L),T_L(x^\star_L)\Big)+\hat \zeta^n_k \geq \frac{\epsilon}{L}}.}
This implies that Assumption \ref{assm:hatt}(v) holds. To see that Assumption \ref{assm:hatt}(vi) holds, we only need to take $\bar w = \sum_{l=1}^L \bar w_l$. The proof is hence complete.

\subsection{Proof of Theorem \ref{thm:samplecomplex}} \label{sub:samplecomplex}
This proof is inspired from \cite{haskell2016}. For this case, a dominating Markov chain is defined as
\beqq{ Y^n_{k+1} = 
\left\{\begin{array}{ll} 
\eta        & \text{ with probability } p \text{ if } Y^n_k = \eta\\ 
Y^n_k - 1                       & \text{ with probability } p \text{ if } Y^n_k \geq  \eta+1\\
d & \text{ with probability } 1-p\end{array}\right.}
It can be shown that for any $k\in\Na$, we have
\beqq{\pr{\rho(\hat X^n_k,x^\star)>\kappa} \leq \pr{\epsilon Y^n_k > \kappa}.}
Lemma 5.1 and Proposition 5.3 in \cite{haskell2016} shows that for all $k\geq K_{\kappa,\xi}$ and $n\geq N_{\kappa,\xi}$, $ \pr{\epsilon Y^n_k > \kappa}<\xi$. The proof is hence complete.

\section{Conclusions}\label{sec:conclusion}

In this paper, we have introduced a novel approach to deduce the stability of iterated random operators. The approach presented here provides an alternative to the Foster-Lyapunov method, the martingale method, and the ODE method for use in infinite dimensional spaces. We refer to this approach as the \textit{probabilistic contraction analysis}. We hope that this paper can accelerate the development of new recursive stochastic algorithms along with their convergence guarantees.

In our future work, we will also consider other characterizations of probabilistic fixed points (e.g., a mean square version), and explore application to other reinforcement learning problems.

\appendices

\section{Proof of Proposition \ref{PROP:DOMINATE}}\label{app:dominate}
Let $\ALP E_k$ denote the event $E^n_k\leq \epsilon\eta^n_{\epsilon,\delta}$ and $\ALP F_k$ denote the event $\{W^n_k\leq \epsilon,\hat\alpha^n_k\leq 1-\delta\}$.

\begin{lemma}\label{lem:enk1}
 We have
\beqq{[E^n_{k+1}\Big| \ALP E_k,\ALP F_k] &< \epsilon\eta^n_{\epsilon,\delta} \quad \text{almost surely},\\
[E^n_{k+1} - E^n_k\Big| \ALP E_k^\complement,\ALP F_k] &\leq -\epsilon \quad \text{almost surely},\\
E^n_{k+1} - E^n_k &\leq \bar w \quad \text{almost surely}.
}
\end{lemma}
\begin{proof}
For establishing the first inequality, we readily obtain
\beqq{[E^n_{k+1}\Big| \ALP E_k,\ALP F_k] & = \hat\alpha^n_k E^n_k+ W^n_k \leq (1-\delta)(\frac{2\epsilon}{\delta}+\epsilon) +\epsilon \\
& = \epsilon\left( \frac{2}{\delta} - \delta\right) <\epsilon \eta^n_{\epsilon,\delta}.}
To establish the second inequality, note that
\beq{\label{eqn:enk1}E^n_{k+1} - E^n_k \leq \hat\alpha^n_kE^n_k +W^n_k - E^n_k = W^n_k - (1-\hat\alpha^n_k)E^n_k.}
Under the event $\ALP E_k^\complement$ and $\ALP F_k$, we have
\beqq{[E^n_{k+1} - E^n_k|\ALP E_k^\complement,\ALP F_k] \leq \epsilon - \delta E^n_k\leq   \epsilon - \delta \times 2\epsilon/\delta = -\epsilon.}
Moreover, from \eqref{eqn:enk1} and due to the fact that $\hat\alpha^n_k\leq 1$ and $E^n_k\geq 0$ almost surely, we readily conclude that $E^n_{k+1} - E^n_k \leq \bar w$ almost surely.
\end{proof}

We now construct another Markov chain $(Z^n_k)_{k\in\Na}$ on $(\Omega,\ALP F,\mathbb{P})$ as follows: Let $Z^n_1 =\lceil E^n_1/\epsilon\rceil$, and the chain evolves as
\beq{\label{eqn:znk1} Z^n_{k+1} = \left\{\begin{array}{ll}
\eta^n_{\epsilon,\delta} & \text{ if } Z^n_k = \eta^n_{\epsilon,\delta} \\ 
Z^n_k - 1 & \text{ if } W^n_k\leq \epsilon, \hat\alpha^n_k\leq 1-\delta,\\
& \text{ and } Z^n_k \geq  \eta^n_{\epsilon,\delta}+1
\\ Z^n_k+\left\lceil\frac{\bar w}{\epsilon}\right\rceil & \text{ if } W^n_k\geq \epsilon \text{ or } \hat\alpha^n_k\geq 1-\delta.
\end{array}\right.}
By construction and Lemma \ref{lem:enk1}, we have $E^n_k\leq \epsilon Z^n_k$ almost surely, which can be shown via induction. The statement is obviously true for $k=1$. Assume that the statement is true for some $k$. Then, two cases can occur: 
\begin{enumerate}
 \item If $E^n_k\leq \epsilon\eta^n_{\epsilon,\delta}$, which implies that $E^n_{k+1}$ will be less than $ \epsilon\eta^n_{\epsilon,\delta}$ in the event $\ALP F_k$ by Lemma \ref{lem:enk1}. By construction, $Z^n_{k+1}\geq \eta^n_{\epsilon,\delta}$, and thus, $E^n_{k+1} \leq \epsilon Z^n_{k+1}$.
 \item On the other hand, if $E^n_k> \epsilon\eta^n_{\epsilon,\delta}$, then by Lemma \ref{lem:enk1}, $E^n_{k+1}$ is less than or equal to $E^n_k - \epsilon$ under the event in $\ALP F_k$. Since $Z^n_k\geq E^n_k/\epsilon$, we have $Z^n_k \geq \eta^n_{\epsilon,\delta}+1$ and $Z^n_{k+1} = Z^n_k - 1$ under the event $\ALP F_k$.
 \item It is clear from Lemma \ref{lem:enk1} that $E^n_{k+1}\leq E^n_k+\bar w\leq Z^n_k+\bar w$ in the event $\ALP F_k^\complement$, which implies $E^n_{k+1} \leq Z^n_{k+1}$ under the event $\ALP F_k^\complement$.
\end{enumerate}
As a result of the assertion above, $Z^n_k$ stochastically dominates $E^n_k$. We now have the following lemma.
\begin{lemma}
Let $T_Y$ and $T_Z$ denote the transition kernel of $(Y^n_k)$ and $(Z^n_k)$, respectively. Then, for any $q\in \Na$, $q\geq \eta^n_{\epsilon,\delta}$, $T_Y(q,\cdot)$ stochastically dominates $T_Z(q,\cdot)$. Consequently, $Y^n_k$ stochastically dominates $Z^n_k$.
\end{lemma}
\begin{proof}
Since $p^n_{\epsilon,\delta}\leq \pr{W^n_k\leq \epsilon, \hat\alpha^n_k\leq 1-\delta}$, $T_Y(q,\cdot)$ stochastically dominates $T_Z(q,\cdot)$ for all $q\in\Na$, $q\geq \eta^n_{\epsilon,\delta}$. The fact that $Y^n_k$ stochastically dominates $Z^n_k$ then follows from Proposition 1 in \cite[p. 901]{kamae1977stochastic} (see also the remarks following this proposition).
\end{proof}

Consequently, we proved that $\epsilon Y^n_{k+1}$ stochastically dominates $\epsilon Z^n_{k+1}$, which in turn stochastically dominates $E^n_{k+1}$. The induction step is complete and we arrive at the result.

\section{Proof of Proposition \ref{PROP:INVARIANT}}\label{app:invariant}
Let $(\tilde\Omega,\tilde{\ALP F},\tilde{\mathbb{P}})$ be a standard probability space. On this probability space, we define two different Markov chains: $(P_k)$ and $(Q_k)$. Pick $p\in(0,1]$. Markov chains $P_k$ and $Q_k$, $k\in\Na$, evolves as
\beqq{P_{k+1} &= \left\{ \begin{array}{ll} 0 & \text{with probability } p \text{ if } P_k = 0\\ 
P_k - 1 & \text{ with probability } p \text{ if } P_k \geq  1\\ 
P_k+\beta & \text{with probability } 1-p\end{array}\right.,}
\beqq{Q_{k+1} &= \left\{ \begin{array}{ll} 0 & \text{with probability } p \text{ if } Q_k = 0\\ 
Q_k - 1 & \text{with probability } p \text{ if } Q_k \geq  1\\ 
\beta \Big(\lceil Q_k/\beta\rceil+1\Big) & \text{with probability } 1-p\end{array}\right.}
\begin{figure*}[!t]
\centering
\scalebox{0.74}{\begin{tikzpicture}
\node[state, fill = magenta!30!white] (0) {$0$};
\node[state,right=of 0, fill = blue!60!white] (1) {$1$};
\node[state,right=of 1, fill = blue!60!white] (2) {$2$};
\node[draw=none,right=of 2] (2-g) {...};
\node[state,right=of {2-g},text depth=0pt, fill = blue!60!white] (w) {$w$};
\node[state,right=of {w},text depth=0pt, fill = green!50!white] (wp1) {$w+1$};
\node[draw=none,right=of wp1] (3-g) {...};
\node[state,right=of {3-g},text depth=0pt, fill = green!50!white] (2w) {$2w$};
\node[draw=none,right=of 2w] (4-g) {...};
\node[state,right=of {4-g},text depth=0pt, fill = red!50!white] (3w) {$3w$};

\draw[ >=latex,
    auto=right, loop above/.style={out=75,in=105,loop}, every loop]
     (w)    edge                node {$p$} (2-g)
     (1)    edge[bend left]     node[above] {$1-p$} (wp1)
     (2-g)  edge                node[above] {$p$}   (2)
     (2-g) edge[bend left]      node {$1-p$} (3-g)
     (2)   edge[bend left]  node [above]{$1-p$} (3-g)
     (w) edge[bend right]      node {$1-p$} (2w)
     (3-g)    edge[bend left] node[above] {$1-p$} (4-g)
     (2w)    edge[bend right] node[above] {$1-p$} (3w)
     (wp1)    edge[bend left] node[above] {$1-p$} (4-g)
     (0)    edge[bend right] node[below] {$1-p$} (w)
     
     (wp1)   edge             node {$p$}   (w)
     (2)   edge             node {$p$}   (1)
     (1)   edge             node {$p$}   (0)
     (0)   edge[loop above] node {$p$}   (0);
\end{tikzpicture}}
\scalebox{0.74}{\begin{tikzpicture}
\node[state, fill = magenta!30!white] (0) {$0$};
\node[state,right=of 0, fill = blue!60!white] (1) {$1$};
\node[state,right=of 1, fill = blue!60!white] (2) {$2$};
\node[draw=none,right=of 2] (2-g) {...};
\node[state,right=of {2-g},text depth=0pt, fill = blue!60!white] (w) {$w$};
\node[state,right=of {w},text depth=0pt, fill = green!50!white] (wp1) {$w+1$};
\node[draw=none,right=of wp1] (3-g) {...};
\node[state,right=of {3-g},text depth=0pt, fill = green!50!white] (2w) {$2w$};
\node[draw=none,right=of 2w] (4-g) {...};
\node[state,right=of {4-g},text depth=0pt, fill = red!50!white] (3w) {$3w$};

\draw[ >=latex,
    auto=right, loop above/.style={out=75,in=105,loop}, every loop]
     (w)    edge                node {$p$} (2-g)
     (1)    edge[bend left]     node[above] {$1-p$} (2w)
     (2-g)  edge                node[above] {$p$}   (2)
     (2-g) edge[bend left]      node {$1-p$} (2w)
     (2)   edge[bend left]  node [above]{$1-p$} (2w)
     (3-g)    edge[bend right] node[below] {$1-p$} (3w)
     (2w)    edge[bend right] node[above] {$1-p$} (3w)
     (wp1)    edge[bend right] node[below] {$1-p$} (3w)
     (0)    edge[bend right] node[below] {$1-p$} (w)
     (w) edge[bend right]      node {$1-p$} (2w)

     (wp1)   edge             node {$p$}   (w)
     (2)   edge             node {$p$}   (1)
     (1)   edge             node {$p$}   (0)
     (0)   edge[loop above] node {$p$}   (0);
\end{tikzpicture}}
\caption{An illustration of the communication structure of the Markov chains $P_k$ (above) and $Q_k$ (below).}
\hrulefill
\end{figure*}

Both Markov chains thus constructed are supported over the space of non-negative integers, are irreducible, and all non-negative integers are accessible. It is also clear that if $p>2\beta/(2\beta+1)$, then by Theorem 7.5.3 of \cite{douc2018markov}, both Markov chains admit a unique invariant distribution. We next have the following claim:
\begin{claim}
If $P_1 = Q_1$, then $Q_k$ stochastically dominates $P_k$ for every $k\in\Na$. 
\end{claim}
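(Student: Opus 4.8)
The plan is to prove the claim by a direct coupling argument: I would build a single probability space carrying both chains with the prescribed marginal laws and on which the sample paths are ordered pathwise, $Q_k\geq P_k$, and then read off stochastic dominance. The key structural observation is that the two chains differ only in their upward jumps. Both decrement with probability $p$ and increment with probability $1-p$; on the increment, $P$ moves to $P_k+w$ whereas $Q$ moves to $w(\lceil Q_k/w\rceil+1)$. Since $w\lceil m/w\rceil\geq m$ for every nonnegative integer $m$, the $Q$-chain always lands on a multiple of $w$ that sits no lower than the $P$-chain's landing point whenever the two start a step at the same height, and this rounding-up is exactly what forces the domination.

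First I would fix the common driving randomness. On $(\tilde\Omega,\tilde{\ALP F},\tilde{\mathbb P})$ let $(B_k)_{k\in\Na}$ be i.i.d.\ Bernoulli variables with $\tilde{\mathbb P}\{B_k=1\}=p$, and drive both chains from the same sequence: at step $k$, if $B_k=1$ both chains take their ``down'' move (so $P_{k+1}=(P_k-1)^+$ and $Q_{k+1}=(Q_k-1)^+$), and if $B_k=0$ both take their ``up'' move ($P_{k+1}=P_k+w$ and $Q_{k+1}=w(\lceil Q_k/w\rceil+1)$). Because each chain's transition depends only on its own current state together with the common down/up split supplied by $B_k$, the marginal law of $(P_k)$ and of $(Q_k)$ under this coupling coincides with the dynamics prescribed in the statement.

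The main step is the pathwise monotonicity, proved by induction on $k$: I claim $Q_k\geq P_k$ holds $\tilde{\mathbb P}$-almost surely for all $k$. The base case is the hypothesis $P_1=Q_1$. For the inductive step, assume $Q_k\geq P_k$. If $B_k=1$, then $Q_{k+1}=(Q_k-1)^+\geq(P_k-1)^+=P_{k+1}$, since $x\mapsto(x-1)^+$ is nondecreasing. If $B_k=0$, it suffices to check $w\lceil Q_k/w\rceil\geq P_k$, which follows from $w\lceil Q_k/w\rceil\geq Q_k\geq P_k$; hence $Q_{k+1}=w\lceil Q_k/w\rceil+w\geq P_k+w=P_{k+1}$. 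Thus the order is preserved in both cases, completing the induction.

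Finally I would translate the pathwise ordering into stochastic dominance. Since $Q_k\geq P_k$ almost surely under the coupling, $\{P_k>q\}\subseteq\{Q_k>q\}$ for every $q\in\Re$, so $\pr{Q_k>q}\geq\pr{P_k>q}$; as the coupled chains have the same marginals as the original ones, this is exactly the assertion that $Q_k$ stochastically dominates $P_k$. The only place requiring genuine care is the upward-jump comparison in the inductive step; everything else is bookkeeping. The elementary inequality $w\lceil Q_k/w\rceil\geq Q_k$ is the crux of the whole argument, since it is precisely the rounding of $Q$ up to the next multiple of $w$ that guarantees $Q$ never slips below $P$.
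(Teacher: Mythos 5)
Your proof is correct, and it shares the paper's basic strategy---pathwise comparison of the two chains hinging on the rounding inequality $w\lceil q/w\rceil \geq q$---but your execution is genuinely more complete than the paper's. The paper's proof only verifies the one-step comparison from \emph{equal} states $P_k = Q_k = q$ and then invokes Theorem 1.A.6 of Shaked and Shanthikumar to conclude; it never constructs a coupling explicitly, nor does it address what happens once the chains separate, which occurs at the first upward jump, after which the equal-state configuration need never recur. Your argument repairs both points: you build the joint probability space explicitly via a common Bernoulli driving sequence (checking that the marginals are preserved), and your induction hypothesis is the correct, weaker one, $Q_k \geq P_k$, under which you verify both the down-move case (monotonicity of $x \mapsto (x-1)^+$) and the up-move case (via $w\lceil Q_k/w\rceil \geq Q_k \geq P_k$, so $Q_{k+1} \geq P_k + w = P_{k+1}$). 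What the paper's route buys is brevity, since the cited comparison theorem for Markov chains absorbs the inductive bookkeeping; what yours buys is self-containedness and rigor, needing nothing beyond the elementary ceiling inequality and the standard fact that a pathwise ordering under a coupling with the prescribed marginals implies stochastic dominance.
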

\begin{proof}
We show that along every sample path, $Q_k(\tilde \omega)\geq P_k(\tilde\omega)$. Suppose that $P_k = Q_k = q$ for some $q\in\Na$. Then, for any $\tilde\omega\in\tilde\Omega$, either $P_{k+1} = Q_{k+1} = \max\{0,q-1\}$, or  
\beqq{Q_{k+1} & = \beta (\lceil Q_k/\beta\rceil+1) \\
& \geq \beta (Q_k/\beta+1) = Q_k+\beta = P_k+\beta.}
Thus, $Q_{k+1}\geq P_{k+1}$. The result then holds from Theorem 1.A.6 in \cite{shaked2007}.
\end{proof}
As a result of the claim above, if both Markov chains $(P_k)$ and $(Q_k)$ admit invariant distributions $\pi^P$ and $\pi^Q$, respectively, then $\pi^P(0)\geq \pi^Q(0)$. We next identify certain sufficient conditions under which the two Markov chains admit invariant distributions.
\begin{theorem}\label{thm:pq}
The following holds true:
\begin{enumerate}
\item If $p>\beta/(\beta+1)$, then the Markov chain $(P_k)$ has an invariant distribution $(\pi^P(n))_{n=0}^\infty$.
\item If $p>2\beta/(2\beta+1)$, then $Q_k$ has an invariant distribution $(\pi^Q(n))_{n=0}^\infty$.
\end{enumerate}
\end{theorem}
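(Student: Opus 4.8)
The plan is to establish both parts by the same mechanism: show that each chain is irreducible, aperiodic, and positive recurrent on the state space $\{0,1,2,\ldots\}$, and then invoke the Foster--Lyapunov drift criterion (as in \cite{meyn2012}) to deduce the existence of an invariant distribution, which for an irreducible positive recurrent chain is automatically unique. In both cases the Lyapunov function will simply be the identity $V(n)=n$, so the entire argument reduces to a one-step drift computation, and the threshold conditions on $p$ turn out to be exactly what make that drift negative off a finite set.

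First I would check irreducibility and aperiodicity. For $(P_k)$, the down-steps (probability $p$) drive any state to $0$, while from $0$ the up-step of size $w$ followed by down-steps reaches every integer, so the chain is irreducible on $\{0,1,2,\ldots\}$; the self-loop at $0$, which occurs with probability $p$, gives aperiodicity. The same reasoning covers $(Q_k)$: its up-moves always land on a multiple of $w$, and down-steps from those multiples reach every integer, so $(Q_k)$ is likewise irreducible and aperiodic.

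Next I would compute the drift. For $(P_k)$ and any $n\geq 1$,
\[
\ex{V(P_{k+1}) - V(P_k)\mid P_k = n} = -p + (1-p)w = w - p(w+1),
\]
which is strictly negative precisely when $p > w/(w+1)$, while on the finite set $\{0\}$ the drift equals the finite quantity $(1-p)w$. For $(Q_k)$ the only new ingredient is a uniform bound on an up-move: writing $n = mw + r$ with $0\leq r < w$, the up-move equals $w(\lceil n/w\rceil + 1) - n \leq 2w$, so for $n\geq 1$
\[
\ex{V(Q_{k+1}) - V(Q_k)\mid Q_k = n} \leq -p + (1-p)\,2w = 2w - p(2w+1),
\]
which is strictly negative exactly when $p > 2w/(2w+1)$. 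In each case there is therefore a constant $\beta>0$ with drift $\leq -\beta$ outside a finite set and finite drift on that set.

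Applying Foster's criterion for countable irreducible chains then yields positive recurrence, and hence a unique invariant distribution $\pi^P$ (respectively $\pi^Q$), completing both parts. I expect the only point needing genuine care to be the uniform bound on the up-move of $(Q_k)$, since that jump depends on $n \bmod w$; once the clean estimate $w(\lceil n/w\rceil+1)-n \leq 2w$ is in hand, the threshold $p > 2w/(2w+1)$ falls straight out of the drift computation and everything else is standard Markov-chain bookkeeping.
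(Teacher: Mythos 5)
Your proposal is correct and takes essentially the same approach as the paper: a Foster--Lyapunov drift argument with a linear Lyapunov function, the finite set $\{0\}$, and the same key bound $w(\lceil n/w\rceil+1)-n \leq 2w$ on the up-jump of $(Q_k)$, yielding the identical thresholds. The only cosmetic difference is that you route through irreducibility plus the classical Foster criterion for countable chains (which also gives uniqueness), whereas the paper notes the chains are weak Feller and invokes Theorem 12.3.4 of Meyn--Tweedie with the Lyapunov function $(i+1)/\gamma$, an affine rescaling of yours.
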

\begin{proof}
We show that both Markov chains are weak Feller chains since they are defined over a countable state space. 

Let $\gamma_P = (\beta+1)p-\beta>0$. Consider $V_P(i) = (i+1)/\gamma_P$ and compact set $C_P = \{0\}$. Then, given $P_k = i\geq 1$, we have
\beqq{& \ex{V_P(P_{k+1})\big|P_k} - V_P(P_k) \\
& = \frac{pP_k+(1-p)(P_k+\beta+1) -(P_k+1)}{\gamma_P}= -1.}
For $P_k = 0$, we have
\beqq{& \ex{V_P(P_{k+1})\big|P_k = 0} - V_P(0) \\
& = \frac{p+(1-p)(\beta+1) -1}{\gamma_P}= -1+\frac{p}{\gamma_P}.}
Thus, by Theorem 12.3.4 of \cite{meyn2012}, an invariant probability distribution $\pi^P$ for the Markov chain $(P_k)$ exists.

Let $\gamma_Q = (2\beta+1)p-2\beta>0$. Consider $V_Q(i) = (i+1)/\gamma_Q$ and compact set $C_Q = \{0\}$. Then, given $Q_k = i\geq 1$, we have
\beqq{& \ex{V_Q(Q_{k+1})\big|Q_k} - V_Q(Q_k) \\
& = \frac{pQ_k+(1-p)(\beta\lceil \frac{Q_k}{w}\rceil +\beta+1) -(Q_k+1)}{\gamma_Q}\\
& \leq  \frac{pQ_k+(1-p)(Q_k+2\beta+1) -(Q_k+1)}{\gamma_Q}\\
& = -1.}
For $Q_k = 0$, we have
\beqq{& \ex{V_Q(Q_{k+1})\big|Q_k = 0} - V_Q(0) \\
& = \frac{p+(1-p)(\beta+1) -1}{\gamma_Q}\\
& \leq  \frac{p+(1-p)(2\beta+1) -1}{\gamma_Q} = -1+\frac{p}{\gamma_Q}.}
Again, we invoke Theorem 12.3.4 of \cite{meyn2012} to conclude the existence of an invariant probability distribution $\pi^Q$ for the Markov chain $(Q_k)$.
\end{proof}

We characterize the invariant distribution $\pi^Q$ in the following claim.
\begin{claim}\label{cl:qinv}
Assume that $p>2\beta/(2\beta+1)$. Then, the invariant distribution of the Markov chain $(Q_k)$ satisfies $\pi^Q(0) = \frac{2p^\beta-1}{p^\beta}$ and for all $i\in\{1,\ldots,\beta\}$ and $k\geq 0$,
\beqq{\pi^Q(k\beta+i)=  \pi^Q(0)\frac{(1-p)}{p^{k\beta+i}}(1-p^\beta)^{k}.}
\end{claim}
\begin{proof}
See Subsection \ref{sub:qinv} below.
\end{proof}

\begin{corollary}
Assume that $p>2\beta/(2\beta+1)$. Then, an invariant distribution $\pi^P$ exists and $\pi^P(0)\geq \pi^Q(0) = \frac{2p^\beta-1}{p^\beta}$.
\end{corollary}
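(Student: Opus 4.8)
The plan is to combine three already-established facts: the existence criteria of Theorem \ref{thm:pq}, the pathwise stochastic dominance of $(Q_k)$ over $(P_k)$ from the preceding Claim, and the closed form for $\pi^Q(0)$ in Claim \ref{cl:qinv}. The corollary is essentially a packaging statement, so the real work is in checking that the single hypothesis $p > 2w/(2w+1)$ activates all three ingredients.

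First I would settle existence. Theorem \ref{thm:pq}(2) gives $\pi^Q$ directly under $p > 2w/(2w+1)$. For $\pi^P$, Theorem \ref{thm:pq}(1) only requires the weaker threshold $p > w/(w+1)$, and I would note that $2w/(2w+1) > w/(w+1)$ for every $w \geq 1$: cross-multiplying reduces this to $2w^2 + 2w > 2w^2 + w$, which is immediate. Hence the hypothesis forces $p > w/(w+1)$ and $\pi^P$ exists as well.

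Next I would obtain the comparison $\pi^P(0) \geq \pi^Q(0)$. Starting both chains from a common point $P_1 = Q_1$, the Claim gives $\tilde{\mathbb{P}}\{Q_k > q\} \geq \tilde{\mathbb{P}}\{P_k > q\}$ for all $q$ and all $k$; taking $q = 0$ and using that both chains are supported on the nonnegative integers yields $\tilde{\mathbb{P}}\{Q_k = 0\} \leq \tilde{\mathbb{P}}\{P_k = 0\}$ at every step. This is exactly the deduction recorded in the remark following the Claim, and passing to the limit $k \to \infty$ transfers it to the stationary masses, so $\pi^P(0) \geq \pi^Q(0)$. Finally, substituting the value $\pi^Q(0) = (2p^w - 1)/p^w$ from Claim \ref{cl:qinv} gives the stated bound.

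The step deserving the most care is this last limit interchange: pathwise dominance is only a statement at each finite $k$, so to equate $\lim_k \tilde{\mathbb{P}}\{P_k = 0\}$ with $\pi^P(0)$ (and likewise for $Q$) I must justify convergence to stationarity. Both chains are irreducible and aperiodic — the self-loop at $0$ supplies aperiodicity — and the Foster-Lyapunov drift established inside the proof of Theorem \ref{thm:pq} makes them positive recurrent; these together guarantee that the time-$k$ distribution converges to the unique invariant law, legitimizing the interchange. Everything else is bookkeeping.
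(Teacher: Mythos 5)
Your proof is correct and takes essentially the same route as the paper: existence of both invariant distributions via Theorem \ref{thm:pq} together with the threshold comparison $2w/(2w+1) > w/(w+1)$, the pathwise dominance claim to get $\pi^P(0)\geq \pi^Q(0)$, and Claim \ref{cl:qinv} for the closed form $\pi^Q(0) = (2p^w-1)/p^w$. The only difference is that you explicitly justify transferring the finite-$k$ dominance to the stationary masses (irreducibility, aperiodicity from the self-loop at $0$, and positive recurrence from the drift condition), a step the paper's remark following the dominance claim leaves implicit.
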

\begin{proof}
Note that $2\beta/(2\beta+1)>\beta/(\beta+1)$. The proof then follows immediately from Theorem \ref{thm:pq} and Claim \ref{cl:qinv}.
\end{proof}

It is now easy to observe that if $p = p^n_{\epsilon,\delta}$, then the evolution of  $Y^n_k$ is the same as that of $P^n_k+\eta^n_{\epsilon,\delta}$. Thus, their invariant distribution is a ``shifted'' version of the other, that is, $\pi^n(i) = \pi^P(i-\eta^n_{\epsilon,\delta})$ for all $i\geq \eta^n_{\epsilon,\delta}$. Thus, $\pi^n(\eta^n_{\epsilon,\delta}) \geq \frac{2(p^n_{\epsilon,\delta})^\beta-1}{(p^n_{\epsilon,\delta})^\beta} $. This completes the proof of Proposition \ref{PROP:INVARIANT}. 

\subsection{Proof of Claim \ref{cl:qinv}}\label{sub:qinv}
The invariant distribution $\pi^Q$ exists by Theorem \ref{thm:pq} above. It must satisfy
\beqq{\pi^Q(0) &= p\pi^Q(0)+p\pi^Q(1)  \implies \pi^Q(1) = \frac{(1-p)}{p}\pi^Q(0),\\
\pi^Q(i) &= p\pi^Q(i+1) \quad\text{ for all }  i\in\{1,\ldots,\beta-1\},}
which implies that $\pi^Q(i) = \frac{(1-p)}{p^i}\pi^Q(0)$ for all $i\in\{1,\ldots,\beta\}$. Thus, the statement holds for $n=0$ and all $i\in\{0,1,\ldots,\beta\}$. For $n= 1$, we have 
\beqq{\pi^Q(\beta) &= (1-p)\pi^Q(0)+p\pi^Q(\beta+1),\\
\pi^Q(\beta+i) &= p\pi^Q(\beta+i+1)\quad \text{ for all } i\in\{1,\ldots,\beta-1\},}
which implies 
\beqq{\pi^Q(\beta+1) &=  \frac{1}{p}\left( \pi^Q(\beta)-(1-p)\pi^Q(0) \right) \\
& = \frac{(1-p)}{p^{\beta+1}}\pi^Q(0)(1-p^{w})\\
\pi^Q(\beta+i) &=  \frac{(1-p)}{p^{\beta+i}}\pi^Q(0)(1-p^\beta) \text{ for all } i\in\{1,\ldots,\beta\}.}
Consequently, the statement holds for $n=1$ as well. We now prove the result for arbitrary $n\geq 2$. Suppose that the result holds for all $m\leq n-1$ and $i\in\{1,\ldots,\beta\}$. Then, we have
\beq{& \pi^Q(m\beta+1)+\ldots+\pi^Q(m\beta+\beta) \nonumber\\
& = \frac{(1-p)}{p^{m\beta+\beta}}\pi^Q(0)(1-p^\beta)^m\left( 1+\ldots+p^{\beta-1}\right),\nonumber\\
& = \frac{1}{p^{(m+1)\beta}}\pi^Q(0)(1-p^\beta)^{m+1}.\label{eqn:pim1}}
Next, we have 
\beqq{& \pi^Q(n\beta) \\
&= (1-p)\Big(\pi^Q((n-2)\beta+1)+\ldots+\pi^Q((n-2)\beta+\beta)\Big)\\
& \quad +p\pi^Q(n\beta+1),\\
& = \frac{(1-p)}{p^{(n-1)\beta}}\pi^Q(0)(1-p^\beta)^{n-1}+p\pi^Q(n\beta+1),\\
& \pi^Q(n\beta+i) = p\pi^Q(n\beta+i+1) \text{ for all } i\in\{1,\ldots,\beta-1\}.}
Using similar approach as for the previous cases, we have
\beqq{& \pi^Q(n\beta+1) \\
& = \frac{1}{p}\left[\pi^Q((n-1)\beta+\beta)- \frac{(1-p)}{p^{(n-1)\beta}}\pi^Q(0)(1-p^\beta)^{n-1} \right]\\
& = \frac{1}{p}\times\frac{(1-p)}{p^{n\beta}}\pi^Q(0)(1-p^\beta)^{n-1} \left(1-p^\beta \right),}
\beqq{ \pi^Q(n\beta+i+1) & = \frac{1}{p}\pi^Q(n\beta+i) \\
& = \frac{(1-p)}{p^{n\beta+i+1}}\pi^Q(0)(1-p^\beta)^{n},}
which holds for all $i\in\{1,\ldots,\beta-1\}$. Thus, the statement is true for $n$. By the principle of mathematical induction, the statement is established.

We can now compute $\pi^Q(0)$ by noting that
\beqq{\pi^Q(0)+\pi^Q(0) \sum_{m=0}^\infty \frac{1}{p^{(m+1)\beta}}(1-p^\beta)^{m+1} = 1,}
where we used \eqref{eqn:pim1}. The above expression yields
\beqq{\pi^Q(0) \left( 1+\frac{\frac{1-p^\beta}{p^\beta}}{1-\frac{1-p^\beta}{p^\beta}} \right) = \pi^Q(0)\frac{p^\beta}{2p^\beta-1} = 1,}
which implies $\pi^Q(0) = \frac{2p^\beta-1}{p^\beta}$. The proof of the claim is complete.

Note that for $\pi^Q(0)$ to be non-negative, we need $p^\beta\geq 0.5$. We show in the following remark that this is indeed true as long as $p\geq 2\beta/(2\beta+1)$.

\begin{remark}\label{rem:pw}
We show that if $p>2\beta/(2\beta+1)$, then $p^\beta>0.6$, which further implies $\pi^Q(0)>0$. To establish this result, we prove that the map $r\mapsto (1+\frac{1}{r})^r$ is monotonically increasing in $r\in(1,\infty)$. Indeed,
\beqq{\frac{d}{dr} \left(1+\frac{1}{r}\right)^r = \left(1+\frac{1}{r}\right)^r\left( \ln\left( 1+\frac{1}{r}\right) - \frac{1}{r+1} \right).}
Now, since for $t\in (1,1+\frac{1}{r})$, $t < \frac{r+1}{r}$ or $\frac{1}{t}> \frac{r}{r+1}$,  we have
\beqq{ \ln\left( 1+\frac{1}{r}\right) - \frac{1}{r+1} = \int_1^{1+\frac{1}{r}} \left(\frac{1}{t}-\frac{r}{r+1}\right) dt >0. }
Thus, $\frac{d}{dr} \left(1+\frac{1}{r}\right)^r>0$, which implies $\left(1+\frac{1}{r}\right)^r$ is monotonically increasing in $r$ in the domain $[1,\infty)$. Thus,  if $p>2\beta/(2\beta+1)$, we have
\beqq{p^\beta & \geq \frac{1}{(1+\frac{1}{2\beta})^\beta} = \frac{1}{\sqrt{\big(1+\frac{1}{2\beta}\big)^{2\beta}}} \\
& \geq \lim_{r\rightarrow \infty}\frac{1}{\sqrt{\big(1+\frac{1}{r}\big)^{r}}} = \frac{1}{\sqrt{e}} \approx 0.606. }
\end{remark}

\bibliographystyle{ieeetr}
\bibliography{edp,prob,probbook,math,ql,rl,sgd,smc,guptajournal}

\begin{IEEEbiography}[{\includegraphics[width=1in,height=1.25in,clip,keepaspectratio]{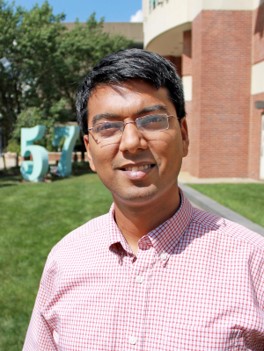}}]{Abhishek Gupta}
Abhishek Gupta is an assistant professor in the ECE department at The Ohio State University. He completed his MS and PhD in Aerospace Engineering from University of Illinois at Urbana-Champaign (UIUC) in 2014, MS in Applied Mathematics from UIUC in 2012, and B.Tech. in Aerospace Engineering from IIT Bombay in 2009. His research interests are in stochastic control theory, probability theory, and game theory with applications to transportation markets, electricity markets, and cybersecurity of control systems.
\end{IEEEbiography}

\begin{IEEEbiography}[{\includegraphics[width=1in,height=1.25in,clip,keepaspectratio]{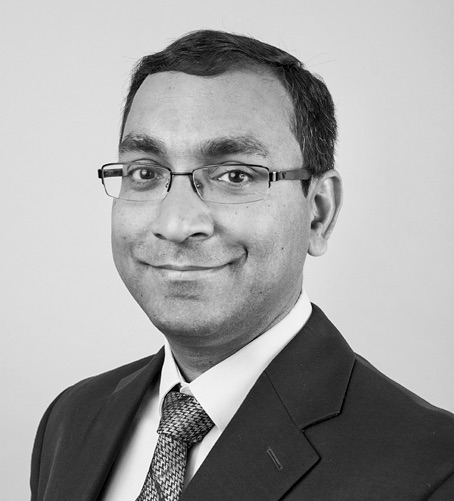}}]{Rahul Jain}
Rahul Jain is the Kenneth C. Dahlberg Early Career Chair in the Electrical Engineering department and an Associate Professor since February 2013 at the University of Southern California, Los Angeles, CA. He received his Ph.D. in EECS, and an M.A. in Statistics, both from the University of California, Berkeley. He also received an M.S. in ECE from Rice University where he was a recipient of the Texas Instruments Fellowship. He received a B.Tech in EE from the Indian Institute of Technology, Kanpur. He has received numerous awards including a James H. Zumberge Faculty Research and Innovation Award in 2009, an IBM Faculty Award, the NSF CAREER Award in 2010 and the ONR Young Investigator Award in 2012. He is a US Fulbright Specialist Scholar for 2017-2020, and was a Fulbright Visiting Professor at the Indian Institute of Science, Bangalore in 2018. His research interests have current focus on AI for Stochastic Systems, Online, Statistical and Reinforcement Learning. He has also worked on Queueing Systems, Power System Economics, Network Economics and Game Theory.

\end{IEEEbiography}

\begin{IEEEbiography}[{\includegraphics[width=1in,height=1.25in,clip,keepaspectratio]{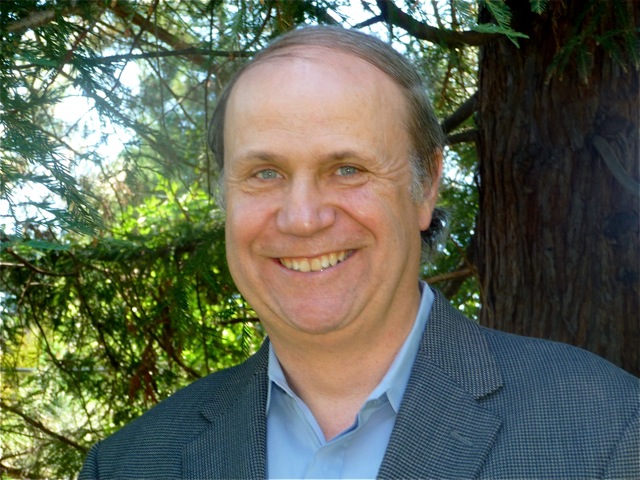}}]{Peter Glynn}
Peter W. Glynn is the Thomas Ford Professor in the Department of Management Science and Engineering (MS\&E) at Stanford University, and also holds a courtesy appointment in the Department of Electrical Engineering. He received his Ph.D in Operations Research from Stanford University in 1982. He then joined the faculty of the University of Wisconsin at Madison, where he held a joint appointment between the Industrial Engineering Department and Mathematics Research Center, and courtesy appointments in Computer Science and Mathematics. In 1987, he returned to Stanford, where he joined the Department of Operations Research. From 1999 to 2005, he served as Deputy Chair of the Department of Management Science and Engineering, and was Director of Stanford's Institute for Computational and Mathematical Engineering from 2006 until 2010. He served as Chair of MS\&E from 2011 through 2015. He is a Fellow of INFORMS and a Fellow of the Institute of Mathematical Statistics, and was an IMS Medallion Lecturer in 1995 and INFORMS Markov Lecturer in 2014. He was co-winner of the Outstanding Publication Awards from the INFORMS Simulation Society in 1993, 2008, and 2016, was a co-winner of the Best (Biannual) Publication Award from the INFORMS Applied Probability Society in 2009, was the co-winner of the John von Neumann Theory Prize from INFORMS in 2010, and will give the INFORMS Philip McCord Morse Lecture in 2020. In 2012, he was elected to the National Academy of Engineering. He was Founding Editor-in-Chief of Stochastic Systems and served as Editor-in-Chief of Journal of Applied Probability and Advances in Applied Probability from 2016 to 2018. His research interests lie in simulation, computational probability, queueing theory, statistical inference for stochastic processes, and stochastic modeling.
\end{IEEEbiography}
\end{document}